\title[Schr\"odinger equations on Damek-Ricci spaces]
{Schr\"odinger equations\\
on Damek-Ricci spaces}
\author{Jean-Philippe Anker}
\address{Universit\'e d'Orl\'eans \& CNRS,
F\'ed\'eration Denis Poisson (FR 2964) \& Laboratoire MAPMO (UMR 6628),
B\^atiment de math\'ematiques -- Route de Chartres,
B.P. 6759 -- 45067 Orl\'eans cedex 2 -- France}
\email{anker@univ-orleans.fr}
\author{Vittoria Pierfelice}
\address{Universit\'e d'Orl\'eans \& CNRS,
F\'ed\'eration Denis Poisson (FR 2964) \& Laboratoire MAPMO (UMR 6628),
B\^atiment de math\'ematiques -- Route de Chartres,
B.P. 6759 -- 45067 Orl\'eans cedex 2 -- France}
\email{vittoria.pierfelice@univ-orleans.fr}
\author{Maria Vallarino}
\address{Universita' di Milano-Bicocca,
Dipartimento di Matematica e Applicazioni,
Via Cozzi 53 -- 20125 Milano (MI) -- Italia}
\email{maria.vallarino@unimib.it}
\thanks{This work was mostly carried out
while the third author was a CNRS postdoc
at the {\it F\'ed\'eration Denis Poisson\/} Orl\'eans-Tours}
\date{\today}
\subjclass[2000]{35Q55, 43A85\,;
22E30, 35J10, 35K08, 43A90, 58D25}
\keywords{Damek-Ricci spaces, Schr\"odinger equation,
heat kernel estimate, dispersive estimate, Strichartz estimate}
\newtheorem{lemma}{Lemma}[section]
\newtheorem{theorem}[lemma]{Theorem}
\newtheorem{prop}[lemma]{Proposition}
\newtheorem{corollary}[lemma]{Corollary}
\newtheorem{remark}[lemma]{Remark}
\newtheorem{definition}[lemma]{Definition}
\newcommand{\la}{\lambda}
\newcommand{\di}{\,\textrm{d}}
\begin{document}

\begin{abstract}   
In this paper we consider the Laplace-Beltrami operator $\Delta$ on  Damek-Ricci spaces and derive pointwise estimates for the kernel of $e^{\tau\Delta}$, when $\tau\in\mathbb C^*$ with $\mathrm{Re}\,\tau\ge0$. When $\tau\in i\mathbb R^*$, we obtain in particular pointwise estimates of the Schr\"odinger kernel associated with $\Delta$.
We then prove Strichartz estimates for the Schr\"odinger equation, for a family
of admissible pairs which is larger than in the Euclidean case. This extends the results obtained by Anker and Pierfelice \cite{AP} on real hyperbolic spaces.
As a further application, we study the dispersive properties of the Schr\"odinger equation associated with a distinguished Laplacian on  Damek-Ricci spaces, showing that in this case the standard $L^1\to L^{\infty}$ estimate fails while suitable weighted Strichartz estimates hold.
\end{abstract}

\maketitle

\section{Introduction}

The study of the dispersive properties of many evolution equations of mathematical physics, including the Schr\"odin\-ger and heat equation
on $\mathbb{R}^{n}$, is of fundamental importance. 
Indeed, dispersive estimates
represent the main tool in the study of several
linear and nonlinear problems. 
We recall some standard facts. Consider the homogeneous Cauchy problem for the linear  Schr\"odinger equation on $\mathbb{R}^n$, $n \geq 1$,
\begin{equation}
\begin{cases}
\;i\,\partial_tu(t,x)+\Delta u(t,x)=0\,\\
\;u(0,x)=f(x),\\
\end{cases}
\end{equation}
whose solution can be represented as
\begin{equation*}
u(t,x)
=e^{\hspace{.25mm}i\hspace{.25mm}t\hspace{.25mm}\Delta}f(x)=
\frac1{(4\pi i t)^{\frac n2}}\int_{\mathbb{R}^{n}}e^{-i\frac{|x-y|^2}{4t}}
f(y)dy \qquad  \forall t\neq 0\,.
\end{equation*} 
By the explicit representation of the kernel of $e^{it \Delta}$
one easily obtains the dispersive estimate
$$
\|e^{it\Delta}\|_{L^1(\mathbb{R}^n)\rightarrow L^\infty(\mathbb{R}^n)}
\lesssim 
|t|^{-\frac n2}\,\qquad\forall t\ne0\,.
$$
It is sufficient to get rid of $i$ in the kernel to obtain
a corresponding representation for the heat kernel of $e^{t \Delta}$,
and hence a similar dispersive estimate
$$
\|e^{t\Delta}\|_{L^1(\mathbb{R}^n)\rightarrow L^\infty(\mathbb{R}^n)}
\lesssim 
t^{-\frac n2}\qquad\forall t>0\,.
$$
It is well known that, starting from the dispersive estimates, it is possible to deduce other space-time estimates which are called Strichartz estimates.
The first such estimate was obtained by Strichartz himself in a special
case; then
Ginibre and Velo \cite{GV} obtained the complete
range of estimates with the exclusion of some
critical cases, the \emph{endpoint} cases, which were
finally proved by Keel and Tao \cite{KT}.
We recall that the modern theory of local and global well posedness 
for semilinear Schr\"odinger equations is based
essentially on these estimates.

In view of the important applications to nonlinear problems, many attempts have been made to study the dispersive properties for the corresponding equations on various Riemannian manifolds
(see e.g. \cite{AP, Ba, BCS, Bo, BGT, HTW, IS, P1, P2} among the others).

More precisely, dispersive and Strichartz estimates for the Schr\"odinger  equation on 
real hyperbolic spaces $\mathbb{H}^n$, which are
manifolds with constant negative curvature, have been stated by Banica, Anker and Pierfelice, Ionescu and Staffilani
(\cite{AP, Ba, BCS, IS, P1, P2}).
Here we are interested in extending these results to the more general context of \emph{Damek-Ricci spaces}, also known as
\emph{harmonic $NA$ groups} (\cite{ADY, Bog, CDKR1, CDKR2, D1, D2, DR1, DR2, R}).
As Riemannian manifolds, these solvable Lie groups include all symmetric spaces of noncompact type and rank one, but most of them are not symmetric, 
thus providing counterexemples
to the Lichnerowicz conjecture \cite{DR1}. 

We briefly recall the definition of the spaces.
Let $\mathfrak{n}=\mathfrak{v}\oplus\mathfrak{z}$ be an Heisenberg-type algebra and let $N$ be the connected and simply connected Lie group associated to $\mathfrak{n}$ (see Section~2 for the details). Let $S$ be the one-dimensional extension of $N$ obtained by making $A=\mathbb{R}^+$ act 
on $N$ by homogeneous dilations. We denote by $Q$ the homogeneous dimension of $N$ and by $n$ the dimension of $S$. Let $H$ denote a vector in $\mathfrak{a}$ acting on
$\mathfrak{n}$ with eigenvalues $1/2$ and (possibly) $1$; we extend the inner product 
on $\mathfrak{n}$ to the algebra $\mathfrak{s}=\mathfrak{n}\oplus\mathfrak{a}$, by requiring $\mathfrak{n}$
and $\mathfrak{a}$ to be orthogonal and $H$ to be a unit vector. We denote by $d$ the left invariant distance on $S$ associated with the
Riemannian metric on $S$ which agrees with the inner product 
on $\mathfrak{s}$ at the identity. 
The Riemannian manifold $(S,d)$ is usually referred to as \emph{Damek-Ricci space}.

Note that $S$ is nonunimodular in general; denote by
$\lambda$ and $\rho$ the left and right Haar measures on $S$, respectively. It is well known that the spaces $(S,d,\lambda)$ 
and $(S,d,\rho)$ are of \emph{exponential growth}.
In particular, the two following Laplacians on $S$ 
have been the object of investigation\,:
\begin{itemize}
\item[(i)] The Laplace-Beltrami operator $\Delta_S$ associated 
with the Riemannian metric $d$.
The operator $-\Delta_S$ is left invariant, it is essentially selfadjoint on $L^2(S,\lambda)$ and its spectrum is the half line $[Q^2/4,\infty)$.
\item[(ii)] The left invariant Laplacian $\mathcal L=\sum_{i=0}^{n-1}{X}_i^2$, where $X_0,...X_{n-1}$ are left invariant vector fields such that at the identity $X_0=H$, $\{X_1,\ldots,X_{m_{\mathfrak{v}}}\}$ is an orthonormal basis of $\mathfrak{v}$ 
and $\{X_{m_{\mathfrak{v}}+1},\ldots,X_{n-1}\}$ is an orthonormal basis of $\mathfrak{z}$. The operator $-\mathcal L$ is essentially selfadjoint on $L^2(S,\rho)$ and its spectrum is $[0,\infty)$. 
\end{itemize}
Considerable effort has been produced to study the so-called 
$L^p$--functional calculus for the operators $-\Delta_S$ and $-\mathcal L$. 
It turned out that if $p\neq 2$, then $-\Delta_S$ possesses
a $L^p$ \emph{holomorphic} functional calculus 
\cite{CS}, whereas $-\mathcal L$ admits a
$L^p$ functional calculus \emph{of Mihlin-H\"ormander type} 
\cite{A2, CGHM}. 
This interesting dichotomy between the two operators 
motivated many authors to study both of them in the context 
of real hyperbolic spaces, in noncompact symmetric spaces of rank one or, 
more generally, in Damek-Ricci spaces and in noncompact symmetric spaces of arbitrary rank \cite{An, AJ, A2, CGM1,CGM2, CGM3, HS, I1, I2, MT, MV, V}.

In this paper we study the dispersive properties of the Schr\"odinger equations on $S$ associated with 
both the Laplacians $\Delta_S$ and $\mathcal L$. 

To this end, in Section \ref{pointwise} we start by proving pointwise estimates of the kernel of the more general operator $e^{\tau\Delta_S}$, for $\tau\in\mathbb C^*$ with $\mathrm{Re}\,\tau\ge0$. These can be thought as estimates of the heat kernel of the Laplacian $\Delta_S$ in complex time and are obtained using the inversion formula for the Abel transform. Similar results were proved in \cite{DM, M} on real hyperbolic spaces. 

In the special case when $\mathrm{Re}\,\tau=0$ this gives pointwise estimates of the Schr\"odinger kernel of $e^{it\Delta_S}$, for $t\in\mathbb R^*$. These imply the following \emph{dispersive estimates:}  
$$
\| e^{it\Delta_S} \|_{      L^{\tilde q'}(S,\lambda)\rightarrow    L^q(S,\lambda)  }
\lesssim 
\begin{cases}
|t|^{-\max\{\frac12-\frac1q, \frac12-\frac1{\tilde{q}} \}n}\, \qquad {\rm{if~}} 0<|t|<1\\
|t|^{-\frac32}\, \qquad {\rm{if~}}  |t|\geq 1\,,
\end{cases}
$$
for all $q,\tilde{q}\in (2,\infty]$. As a consequence, we deduce that the solution $u$ of the nonhomogeneous Cauchy problem 
\begin{equation}\label{LB}
\begin{cases}
\;i\,\partial_tu(t,x)+\Delta_Su(t,x)=F(t,x)\,\\
\;u(0,x)=f(x)\,, \quad x \in S,  
\end{cases}
\end{equation}
satisfies the following \emph{Strichartz estimates}
\begin{equation}\label{str}
\|u\|_{L^p(\mathbb R;\,L^q(S,\lambda))}\lesssim \|f\|_{L^2(S,\lambda)}+\|F\|_{L^{\tilde{p}'}(\mathbb R;\, L^{\tilde{q}'}(S,\lambda))}
\,,
\end{equation}
for all couples $(\frac1p,\frac1q)$ and $(\frac1{\tilde{p}},\frac1{\tilde{q}})$ which lie in the \emph{admissible triangle}
$$
T_n=\Big\{
\Big(\frac1p,\frac1q\Big)\in \Bigl(0,\frac12\Bigr]\times\Bigl(0,\frac12\Bigr) :~
\frac2p+\frac nq\ge\frac n2\Big\}\cup\Big\{\Bigl(0,\frac12\Bigr)\Big\}\,.
$$
Note that the set $T_n$ of admissible pairs for $S$ is much wider that the admissible interval $I_n$ for $\mathbb R^n$ which is just the lower edge of the triangle.  This phenomenon was already observed by Anker and Pierfelice for real hyperbolic spaces \cite{AP} and here is generalized 
to Damek-Ricci spaces. 
\smallskip

As an application of the estimates \eqref{str}, we study the dispersive properties of the {\emph{Schr\"odinger equation associated with $\mathcal L$}:}
\begin{equation}\label{distinguishedL}
\begin{cases}
\;i\,\partial_tu(t,x) + \mathcal L u(t,x)=F(t,x)\,\\
\;u(0,x)=f(x)\,,\quad x\in S\,.
\end{cases}
\end{equation}
In this case we prove that there is no dispersive $L^1-L^{\infty}$ 
estimate for the solution of the homogeneous Cauchy problem. Beside this, we are able to show that the solution 
of the nonhomogeneous Cauchy problem \eqref{distinguishedL} satisfies suitable weighted Strichartz 
estimates for couples $(\frac1p,\frac1q)$ and $(\frac1{\tilde{p}},\frac1{\tilde{q}})$ in the admissible triangle $T_n$. More precisely, we obtain this result as an application of the Strichartz estimates proved for the equation associated with $\Delta_S$ using a special relationship between the two Laplacians (see (\ref{relationship})). 

\bigskip
Note that, in the 
particular case of real hyperbolic spaces, D.~M\"uller and C.~Thiele 
found a similar lack of the dispersive effect for the wave equation 
associated with $\mathcal L$ and they suggested that Strichartz estimates 
shall not hold in that case
\cite[Remark 7.2]{MT}.  

\section{Damek-Ricci spaces}
In this section we recall the definition of $H$-type groups, describe their Damek-Ricci extensions, and recall the main results of spherical analysis on these spaces. For the details see \cite{ADY, A2, CDKR1, CDKR2, D1, D2, DR1, DR2}.
\smallskip

Let $\mathfrak{n}$ be a Lie algebra equipped  with an inner product $\langle\cdot,\cdot\rangle$ and denote by $|\cdot |$ the corre\-spon\-ding norm. Let $\mathfrak{v}$ and $\mathfrak{z}$ be complementary orthogonal subspaces of $\mathfrak{n}$ such that $[\mathfrak{n},\mathfrak{z} ]=\{0\}$ and $[\mathfrak{n},\mathfrak{n}]\subseteq \mathfrak{z}$.
According to Kaplan \cite{K}, the algebra $\mathfrak{n}$ is of $H$-type if for every $Z$ in $\mathfrak{z}$ of unit length the map $J_Z:\mathfrak{v}\to \mathfrak{v}$, defined by
$$\langle J_ZX,Y\rangle\,=\,\langle Z,[X,Y]\rangle\qquad\forall X, Y\in \mathfrak{v}\,,$$
is orthogonal. The connected and simply connected Lie group $N$ associated to $\mathfrak{n}$ is called an $H$-type group. We identify $N$ with its Lie algebra $\mathfrak{n}$ via the exponential map
\begin{eqnarray*}
\mathfrak{v}\times\mathfrak{z} &\to& N\\
(X,Z)&\mapsto& \exp(X+Z)\,.
\end{eqnarray*}
The product law in $N$ is
$$(X,Z)(X',Z')=\Bigl(X+X',Z+Z'+\frac12\,[X,X']\Bigr)\qquad\forall X,\,X'\in \mathfrak{v}\quad\forall Z,\,Z'\in\mathfrak{z}\,.$$
The group $N$ is a two-step nilpotent group, hence unimodular, with Haar measure ${\mathrm{d}} X \,{\mathrm{d}} Z$.
We define the following dilations on $N$:
$$\delta_a(X,Z)=(a^{1/2}X,aZ)\qquad\forall (X,Z)\in N \quad\forall a\in\mathbb{R}^+\,.$$

Set $Q=(m+2k)/2$, where $m$ and $k$ denote the dimensions of $\mathfrak{v}$ and $\mathfrak{z}$, respectively. The number $Q$ is called the homogeneous dimension of $N$.

Let $S$ be the one-dimensional extension of $N$ obtained by making $A=\mathbb{R}^+$ act on $N$ by homogeneous dilations. We shall denote by $n$ the dimension $m+k+1$ of $S$. Let $H$ denote a vector in $\mathfrak{a}$ acting on $\mathfrak{n}$ with eigenvalues $1/2$ and (possibly) $1$; we extend the inner product on $\mathfrak{n}$ to the algebra $\mathfrak{s}=\mathfrak{n}\oplus\mathfrak{a}$, by requiring $\mathfrak{n}$ and $\mathfrak{a}$ to be orthogonal and $H$ to be a unit vector. 
The map
\begin{eqnarray*}
\mathfrak{v}\times\mathfrak{z}\times\mathbb{R}^+ &\to& S\\
(X,Z,a)&\mapsto& \exp(X+Z)\exp(\log a \,H)
\end{eqnarray*}
gives global coordinates on $S$. The product in $S$ is given by the rule
$$(X,Z,a)(X',Z',a')=\Bigl(X+a^{1/2}X',Z+a\,Z'+ \frac12\,a^{1/2}[X,X'],a\,a'\Bigr)$$
for all $(X,Z,a),\,(X',Z',a')\in S$. The
group $S$ is nonunimodular: the right and left Haar measures on
$S$ are  given by 
$${\mathrm{d}}\rho(X,Z,a)=a^{-1}\,{\mathrm{d}} X\,{\mathrm{d}} Z\,{\mathrm{d}} a
\qquad{\textrm{and}}\qquad
{\mathrm{d}}\lambda(X,Z,a)=a^{-(Q+1)}\,{\mathrm{d}} X\,{\mathrm{d}} Z\,{\mathrm{d}} a \,.$$  
Then the modular function is $\delta(X,Z,a)=a^{-Q}$. For $p\in [1,\infty)$ we
denote by $L^p(S,\lambda)$ and $L^p(S,\rho)$ the spaces of all measurable 
functions $f$ such that $\int_S|f|^p \,{\mathrm{d}}\lambda <\infty$ and $\int_S|f|^p \,{\mathrm{d}}\rho <\infty$, respectively.

We equip $S$ with the left invariant Riemannian metric which agrees with the inner product on $\mathfrak{s}$ at the identity. From \cite[formula (2.18)]{ADY}, for all $(X,Z,a)$ in~$S$,
\begin{equation}\label{distanza}
\cosh ^2\left(\frac{r(X,Z,a)}2\right)=\left(\frac{a^{1/2}+a^{-1/2}}2+\frac18\,a^{-1/2}|X|^2\right)^2+\frac14\,a^{-1}|Z|^2\,,
\end{equation}
where $r(X,Z,a)$ denotes the distance of the point $(X,Z,a)$ from the identity.

We denote by $\Delta_S$ the Laplace-Beltrami operator associated with 
this Remannian structure on $S$.

A radial function on $S$ is a function that depends only on the distance from the identity. If $f$ is radial, then by \cite[formula (1.16)]{ADY}, 
$$
\int_Sf\,{\mathrm{d}}\lambda =\int_0^\infty f(r)\,A(r)\,{\mathrm{d}} r\,,
$$
where 
\begin{equation}\label{stimaA}
A(r)=2^{m+k}\sinh ^{m+k}\left(\frac r2\right)\cosh^k\left(\frac r2\right)\qquad\forall r\in\mathbb{R}^+\,.
\end{equation}
A radial function $\phi$ is spherical if it is an eigenfunction of $\Delta_S$ and $\phi(e)=1$. For $s$ in $\mathbb{C}$, 
let $\phi_s$ be the spherical function with eigenvalue $-\big(s ^2+Q^2/4\big)$, as in \cite[formula (2.6)]{ADY}. The spherical Fourier transform ${\mathcal H} f$ of an integrable radial function $f$ on $S$ is defined by the formula
$${\mathcal H} f(s)=\int_S \phi_s\,f\,{\mathrm{d}}\lambda  \,.$$
For ``nice'' radial functions $f$ on $S$, an inversion formula and a Plancherel formula hold:  
$$f(x)=c_S\int_{0}^{\infty}{\mathcal H} f (s)\,\phi_s(x)\,| {\mathbf{c}} (s) |^{-2} \,{\mathrm{d}} s\qquad \forall x\in S\,,$$
and
$$\int_S|f|^2\,{\mathrm{d}}\lambda  =c_S\int_0^{\infty}|{\mathcal H} f (s)|^2\,|{\mathbf{c}}(s)|^{-2}\,{\mathrm{d}} s\,,$$
where the constant $c_S$ depends only on $m$ and $k$, and ${\mathbf{c}}$ denotes the Harish-Chandra function.
 
Let ${\mathcal A}$ denote the Abel transform and let ${\mathcal F}$ denote the Fourier transform on the real line, defined by 
$${\mathcal F} g(s)=\int_{-\infty}^{+\infty}g(r)\,{\mathrm{e}}^{-isr}\,{\mathrm{d}} r\,,$$ 
for each integrable function $g$ on~$\mathbb{R}$. It is well known that ${\mathcal H}={\mathcal F}\circ {\mathcal A}$, hence ${\mathcal H}^{-1}={\mathcal A}^{-1}\circ {\mathcal F}^{-1}$. For later use, we recall the inversion formula for the Abel transform \cite[formula (2.24)]{ADY}. We define the differential operators ${\mathcal D}_1$ and ${\mathcal D}_2$  on the real line by
\begin{equation}\label{D1D2}
{\mathcal D}_1=\,-\frac1{\sinh r}\,\frac\partial{\partial r}\,,\qquad {\mathcal D}_2=\,-\frac1{\sinh(r/2)}\,\frac\partial{\partial r} \,.
\end{equation} 
If $k$ is even, then
\begin{equation}\label{inv1}
{\mathcal A}^{-1}f(r)=a_S^e\,{\mathcal D}_1^{k/2}{\mathcal D}_2^{m/2}f (r)\,,
\end{equation}
where $a_S^e=2^{-(3m+k)/2}\pi^{-(m+k)/2}$, while if $k$ is odd, then
\begin{equation}\label{inv2}
{\mathcal A}^{-1}f(r)=a_S^o\int_r^{\infty}{\mathcal D}_1^{(k+1)/2}{\mathcal D}_2^{m/2}f (s) \,{\mathrm{d}}\nu(s)\,,
\end{equation}
where $a_S^o= 2^{-(3m+k)/2}\pi^{-n/2}$ and ${\mathrm{d}}\nu(s)=(\cosh s-\cosh r)^{-1/2}\sinh s \,{\mathrm{d}} s$.

\bigskip
Let $
\mathcal L= \sum_{i=0}^{n-1} X_i^2\,
$ be the left invariant Laplacian defined in the Introduction.  
There is a special relationship between $\mathcal L$ and $\Delta_S$. Indeed, denote by ${\Delta}_Q$ the shifted operator 
$\Delta_S+{Q^2}/4$; then by \cite[Proposition~2]{A2},
\begin{equation}\label{relationship}
\delta^{-1/2}{(-\mathcal L)}\,\delta^{1/2}f=-\Delta_Q f
\end{equation}
for all smooth compactly supported radial functions $f$ on $S$.  
The spectra of $-{\Delta}_Q$ on $L^2(S,\lambda)$ and $-\mathcal L$ on $L^2(S,\rho)$ are both $[0,+\infty)$. Let $E_{{\Delta} _Q}$ and $E_{\mathcal L}$ be the spectral resolution of the identity for which 
$$-{\Delta} _Q=\int_0^{+\infty}s\, \,{\mathrm{d}} E_{{\Delta} _Q}(s)
\qquad{{\textrm{and}}}\qquad-\mathcal L=\int_0^{+\infty}s\, \,{\mathrm{d}} E_{\mathcal L}(s)\,.$$
For each bounded measurable function $m$ on $\mathbb{R}^+$ the operators $m(-{\Delta} _Q)$ and $m(-\mathcal L)$, spectrally defined by  
$$m(-{\Delta} _Q)=\int_0^{+\infty}m(s) \,{\mathrm{d}} E_{{\Delta} _Q}(s)\qquad{{\textrm{and}}}\qquad m(-\mathcal L)=\int_0^{+\infty}m(s) \,{\mathrm{d}} E_{\mathcal L}(s)\,,$$
are bounded on $L^2(S,\lambda)$ and $L^2(S,\rho)$ respectively. By (\ref{relationship}) and the spectral theorem, 
\begin{equation}\label{relazionemolt}
\delta^{-1/2}m(-\mathcal L)\,\delta^{1/2}f=m(-{\Delta} _Q)f\,,
\end{equation}
for smooth compactly supported radial functions $f$ on $S$.
Let ${k_{m(-\mathcal L)}}$ and ${k_{m(-{\Delta}_Q)}}$ denote the convolution kernels of $m(-\mathcal L)$ and $m(-{\Delta}_Q)$ respectively; then 
$$m(-{\Delta}_{Q}) f=f\ast{k_{m(-{\Delta}_Q)}}
\qquad{{\textrm{and}}}\qquad 
m(-\mathcal L) f=f\ast{k_{m(-\mathcal L)}}\qquad\forall f\in C^{\infty}_c(S)\,,$$
where $\ast$ denotes the convolution on $S$, defined by
\begin{equation*}
f\ast g(x)=\int_Sf(xy)\,g(y^{-1})\,{\mathrm{d}}\lambda  (y)=
\int_Sf(xy^{-1})\,g(y)\,{\mathrm{d}}\rho   (y)\,,
\end{equation*}
for all functions $f,g$ in $C_c(S)$ and $x$ in $S$.
Given a bounded measurable function $m$ on $\mathbb{R}^+$ the kernel 
${k_{m(-{\Delta}_Q)}}$ is radial and 
\begin{equation}\label{relazionenuclei}
{k_{m(-\mathcal L)}}=\delta^{1/2}\,
{k_{m(-{\Delta}_Q)}}\,.
\end{equation}
 Moreover, the spherical transform ${\mathcal H} k_{m(-{\Delta}_Q)}$ of $k_{m(-{\Delta}_Q)}$ is given by 
\begin{equation}\label{Hk}
{\mathcal H} k_{m(-{\Delta}_Q)}(s)=m(s^2)\qquad\forall s \in\mathbb{R}^+\,.
\end{equation}
For a proof of formula (\ref{Hk}) see \cite{ADY, A2}.

\section {Pointwise kernel estimates }\label{pointwise}
We consider the general operator $e^{\tau\Delta_S}$, where $\tau=|\tau|\,e^{i\theta}\in\mathbb C\setminus \{0\}$ and denote by $h_\tau $ its convolution kernel. Our aim is to find pointwise estimates of this kernel when $\mathrm{Re}\,\tau\ge0$. Notice that if $\tau\in\mathbb R^+$, then $h_\tau $ corresponds to the heat kernel and if $\tau=it\in i\mathbb R\setminus \{0\}$, then it corresponds to the Schr\"odinger kernel on Damek-Ricci spaces.

Notice that, for any  $\tau\in\mathbb C\setminus \{0\}$ with $\mathrm{Re}\,\tau\ge0$, we have $e^{\tau\Delta_S}=m_\tau (-\Delta_Q)$, where 
$m_\tau (v)=e^{-\frac{Q^2\tau}4-\tau v}$. Then by (\ref{Hk}), the spherical Fourier transform of $h_\tau $ is  
$$
\mathcal H h_\tau (s)=m_\tau (s^2)=e^{-\frac{Q^2\tau}4}\,e^{-\tau s^2}\,,
$$
and by applying the inverse Abel transform (\ref{inv1}) and (\ref{inv2}), we obtain the following formula for the kernel $h_\tau $:
\begin{equation}\label{steven}
h_\tau (r)=\begin{cases}
C\,(|\tau|e^{i\theta})^{-\frac12}\,e^{-\frac{Q^2\tau}4}\,
{\mathcal D}_1^{k/2}{\mathcal D}_2^{m/2}
\bigl(e^{\hspace{.25mm}-\frac{r^2}{4\tau}}\bigr)&{\rm{if}}\, k \,{\rm{even}}\,,\\
C\,(|\tau|e^{i\theta})^{-\frac12}\,e^{-\frac{Q^2\tau}4}\,
\int_r^{\infty}{\mathcal D}_1^{(k+1)/2}{\mathcal D}_2^{m/2}
\bigl(e^{\hspace{.25mm}-\frac{s^2}{4\tau}}\bigr)
\,{\mathrm{d}}\nu(s)&{\rm{if}}\, $k$ \,{\rm{odd}}\,,
\end{cases}
\end{equation}
where $\mathcal D_1=- \frac1{\sinh r}\frac\partial{\partial r}$ and 
$\mathcal D_2=-\frac1{\sinh (r/2)}\frac\partial{\partial r}$. 
We now prove a pointwise estimate of the kernel $h_\tau $.
\begin{prop}\label{PointwiseKernelEstimates}
There exists a positive constant \,$C\!$
\,such that, for every \,$\tau\in\mathbb C^*$ with $\mathrm{Re}\,\tau\ge0$ and \,for any $r\in\mathbb R^+$\;, we have
\begin{equation}\label{ker2}
|h_\tau (r)|\leq\,
\begin{cases}
C\,|\tau|^{-n/2}\,(1+r)^{\frac{n-1}2}\,e^{-\frac Q2r}\,e^{-\frac14\mathrm{Re}\,\{Q^2\tau+\frac{r^2}\tau\}}
&\text{if}\hspace{3mm}|\tau|\!\le\!1\!+\!r\,,\\
C\,|\tau|^{-3/2}\,(1+r)\,e^{-\frac Q2r}\,e^{-\frac14\mathrm{Re}\,\{Q^2\tau+\frac{r^2}\tau\}      }
&\text{if}\hspace{3mm}|\tau|\!>\!1\!+\!r\,.\\
\end{cases}
\end{equation}
\end{prop}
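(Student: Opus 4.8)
The plan is to start from the explicit formula \eqref{steven} and reduce everything to estimating the action of $\mathcal D_1,\mathcal D_2$ on the complex Gaussian $e^{-r^2/(4\tau)}$. First I would pull out the two exponentials: since $\mathrm{Re}\,\tau\ge0$ forces $\mathrm{Re}(1/\tau)=\mathrm{Re}\,\tau/|\tau|^2\ge0$, both $|e^{-Q^2\tau/4}|=e^{-\frac{Q^2}4\mathrm{Re}\,\tau}$ and $|e^{-r^2/(4\tau)}|=e^{-\frac{r^2}4\mathrm{Re}(1/\tau)}$ are $\le1$, and their product is exactly the factor $e^{-\frac14\mathrm{Re}\{Q^2\tau+r^2/\tau\}}$ of \eqref{ker2} (this is also where $\mathrm{Re}\,\tau\ge0$ enters, guaranteeing genuine decay and, in the odd case, convergence of the integral). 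It then suffices to bound the remaining factor $|\tau|^{-1/2}\bigl|\mathcal D_1^{k/2}\mathcal D_2^{m/2}e^{-r^2/(4\tau)}\bigr|$ (and its $k$-odd analogue), with the Gaussian modulus removed, by $|\tau|^{-n/2}(1+r)^{\frac{n-1}2}e^{-\frac Q2 r}$ when $|\tau|\le1+r$ and by $|\tau|^{-3/2}(1+r)e^{-\frac Q2 r}$ when $|\tau|>1+r$.

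The key simplification is the substitution $t=\cosh(r/2)$, under which $\mathcal D_1=-\frac1{4t}\partial_t$ and $\mathcal D_2=-\frac12\partial_t$. Then $\mathcal D_2^{m/2}$ is a constant multiple of $\partial_t^{m/2}$, while $\mathcal D_1^{k/2}=(-\tfrac14)^{k/2}(t^{-1}\partial_t)^{k/2}$ expands as $\sum_{\ell=1}^{k/2}c_\ell\,t^{-(k-\ell)}\partial_t^{\ell}$. Hence $\mathcal D_1^{k/2}\mathcal D_2^{m/2}e^{-r^2/(4\tau)}$ is a finite sum of terms $t^{-(k-\ell)}\partial_t^{\,p}f_0$ with $f_0=e^{\psi(t)}$, $\psi(t)=-(\operatorname{arccosh}t)^2/\tau$, and $\tfrac m2<p\le\tfrac{m+k}2=\tfrac{n-1}2$. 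Writing $\beta:=\operatorname{arccosh}t=r/2$, the phase $\psi$ is quadratic in $\beta$, so every derivative $\psi^{(i)}$ ($i\ge1$) carries exactly one factor $\tau^{-1}$ and is at most linear in $r$. By the Fa\`a di Bruno formula $\partial_t^{\,p}f_0=f_0\sum_{q=1}^{p}\tau^{-q}R_{p,q}(t)$, where the $q$-block term is a polynomial of degree $\le q$ in $r$ times algebraic functions of $t=\cosh(r/2)$ and $(t^2-1)^{-1/2}=1/\sinh(r/2)$.

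Combining these with the operator coefficients, each term decays like $e^{-\frac Q2 r}$ for large $r$ — most transparently seen back in the $r$ variable, where each $\mathcal D_1$ contributes $1/\sinh r\sim e^{-r}$ and each $\mathcal D_2$ contributes $1/\sinh(r/2)\sim e^{-r/2}$, giving $e^{-r(k/2+m/4)}=e^{-Qr/2}$, while the apparent singularities at $r=0$ cancel (consistent with the smoothness of $h_\tau$). This yields $|\tau|^{-1/2}\bigl|\mathcal D_1^{k/2}\mathcal D_2^{m/2}e^{-r^2/(4\tau)}\bigr|\lesssim e^{-\frac Q2 r}\sum_{q=1}^{(n-1)/2}|\tau|^{-\frac12-q}(1+r)^{q}$. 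The two regimes now follow by comparing terms: for $|\tau|\le1+r$ the top term $q=\tfrac{n-1}2$ dominates, since the ratio of a lower term to it equals $\bigl(|\tau|/(1+r)\bigr)^{\frac{n-1}2-q}\le1$; for $|\tau|>1+r$ the bottom term $q=1$ dominates, giving $|\tau|^{-3/2}(1+r)$. This proves \eqref{ker2} when $k$ is even.

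For $k$ odd I would apply the very same analysis to the inner expression $\mathcal D_1^{(k+1)/2}\mathcal D_2^{m/2}e^{-s^2/(4\tau)}$ (now $\tfrac{k+1}2+\tfrac m2=\tfrac n2$ operators), obtaining a pointwise bound on the integrand, and then estimate the fractional integral $\int_r^{\infty}(\cdots)\,{\mathrm d}\nu(s)$ with ${\mathrm d}\nu(s)=(\cosh s-\cosh r)^{-1/2}\sinh s\,{\mathrm d} s$; after $v=\cosh s$ this is a Riemann--Liouville integral of order $1/2$. I expect \emph{this} step to be the main obstacle: one must balance the integrable square-root singularity at $s=r$ (which together with the near-diagonal factor $1/\sinh(r/2)\sim e^{-r/2}$ supplies the missing half-power of exponential decay, upgrading $e^{-(Q-1)r/2}$ to $e^{-Qr/2}$), the Gaussian width $\sim|\tau|^{1/2}$, and the $s$-decay, so that the integral converts the $\tfrac n2$ operator count into precisely the factors $|\tau|^{-n/2}(1+r)^{\frac{n-1}2}$ of \eqref{ker2}, uniformly over both regimes $|\tau|\le1+r$ and $|\tau|>1+r$. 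The even case, by contrast, is essentially bookkeeping once the substitution $t=\cosh(r/2)$ has been made.
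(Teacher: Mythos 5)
Your treatment of the even case is correct and is essentially the paper's argument: the expansion of $\mathcal D_1^{k/2}\mathcal D_2^{m/2}e^{-r^2/(4\tau)}$ into $e^{-r^2/(4\tau)}\sum_{j=1}^{(n-1)/2}\tau^{-j}a_j(r)$ with $a_j(r)=O\bigl((1+r)^j e^{-Qr/2}\bigr)$, followed by comparing the $j=1$ and $j=\frac{n-1}2$ terms in the two regimes, is exactly what the paper does (your $\cosh(r/2)$ substitution is a cosmetic variant). The problem is the odd case, which you correctly identify as the main obstacle but whose proposed resolution does not work. Your plan there is to bound the integrand of $\int_r^\infty(\cdots)\,\mathrm d\nu(s)$ pointwise and then integrate, balancing the square-root singularity against ``the Gaussian width $\sim|\tau|^{1/2}$''. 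But the estimate must hold uniformly for all $\tau$ with $\mathrm{Re}\,\tau\ge0$, in particular for $\tau=it$ purely imaginary, where $|e^{-s^2/(4\tau)}|\equiv1$: there is no Gaussian localization in $s$ at all. Taking absolute values inside the integral then yields (as the paper computes in its estimate \eqref{eq7}) only $|\tau|^{-\frac12}e^{-\frac Q2r}\bigl(\frac{1+r}{|\tau|}\bigr)^{n/2}$ for the top term, since the odd case involves $\frac{k+1}2+\frac m2=\frac n2$ operators. This falls short of the claimed $|\tau|^{-n/2}(1+r)^{\frac{n-1}2}$ by a factor $\bigl((1+r)/|\tau|\bigr)^{1/2}$, which is unbounded in the regime $|\tau|\le1+r$. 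No rearrangement of a pointwise bound can recover this half power, because the loss reflects genuine cancellation in the integral.

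The missing idea is the paper's integration by parts. One isolates from the expansion the single principal term $P(\tau,s)$, which carries the exact derivative $\frac\partial{\partial s}\bigl(e^{-s^2/(4\tau)}\bigr)$, splits the integral at $s=\sqrt{r^2+|\tau|}$, bounds the near piece pointwise (there $\int_r^{\sqrt{r^2+|\tau|}}\frac{s\,\mathrm ds}{\sqrt{s^2-r^2}}=|\tau|^{1/2}$ supplies the gain), and integrates by parts on $[\sqrt{r^2+|\tau|},\infty)$ so that the oscillation of $e^{-s^2/(4\tau)}$ is exploited: the boundary term and the new integral (whose singularity $(s^2-r^2)^{-3/2}$ is integrated away from the diagonal) are each $O\bigl(|\tau|^{-\frac n2+\frac12}(1+r)^{\frac{n-1}2}e^{-\frac Q2r}\bigr)$. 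The lower-order terms $R(\tau,s)$, having at most $\frac n2-1$ powers of $\tau^{-1}$, can indeed be handled by your absolute-value argument. Without some such use of the derivative structure of the phase, the odd case in the regime $|\tau|\le1+r$ remains unproved.
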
 
\begin{proof}
We shall resume in part the analysis
carried out in \cite[Section 5]{ADY} \,for the heat kernel, in \cite[Proposition 3.1]{AP} and \cite{DM, M} in the case of real hyperbolic spaces. Following the same ideas of \cite[Corollary 5.21]{ADY} by induction we can prove that for any $p,\,q\in \mathbb{N}$ 
such that $p+q\geq 1$
\begin{equation}\label{derivativepq}
\begin{aligned}
\mathcal D_1^q\,\mathcal D_2^p\bigl(e^{-\frac{r^2}{4\tau}}\bigr)&
=e^{-\frac{r^2}{4\tau}}
\,\sum_{j=1}^{p+q}
\tau^{-j}\,a_j(r)\,,
\end{aligned}
\end{equation}
where $a_j$ are finite linear combinations of products $f_{p_1,q_1},...,f_{p_j,q_j}$ with $p_1+...+p_j=p,\,q_1+...+q_j=q$ and
$$
f_{p,q}(r)\asymp (1+r)\,e^{-(p/2+q)r}.
$$
Thus $a_j(r)=O\big((1+r)^j\,e^{-\frac{(p+2q)}2r}\big)$.

We first consider the case when $k$ is even. By (\ref{steven}) and 
(\ref{derivativepq}) we obtain that
\begin{equation*}
\begin{aligned}
|h_\tau (r)|&\lesssim|\tau|^{-1/2}\,e^{-\frac14\mathrm{Re}\,\{Q^2\tau+\frac{r^2}\tau\}}\,\sum_{j=1}^{(k+m)/2}|\tau|^{-j}\,(1+r)^j\,e^{-\frac{(m+2k)}4r}\\
&\lesssim|\tau|^{-1/2}\,e^{-\frac14\mathrm{Re}\,\{Q^2\tau+\frac{r^2}\tau\}}\,e^{-\frac Q2r}\,\Big[\frac{1+r}{|\tau|}+\Big(\frac{1+r}{|\tau|}\Big)^{(n-1)/2}  \Big]\,.
\end{aligned}
\end{equation*}
This easily implies the desired estimate in this case. 

Let now consider the case when $k$ is odd. By (\ref{steven}) and 
(\ref{derivativepq}) we obtain  
\begin{equation*}
\begin{aligned}
|h_\tau (r)|&\lesssim \sum_{j=1}^{(k+1+m)/2}|\tau|^{-j}\,
|\tau|^{-1/2}\,\int_r^{\infty}\di s
\,\frac{\sinh s}{\sqrt{\cosh s-\cosh r}}\,
(1+s)^j\times\\
&\times \,e^{-\frac{m+2(k+1)}4s}\,\,e^{-\frac14\mathrm{Re}\,\{Q^2\tau+\frac{s^2}\tau\}}\,.
\end{aligned}
\end{equation*}
Here and throughout the proof,
we make repeated use of the following elementary estimates\,:
\begin{equation}\label{eq4}
\sinh s\,\asymp\frac s{1+s}\,e^{\hskip.2mm s}\,,
\end{equation}
and
\begin{equation}\label{eq5}
\begin{aligned}
\cosh s-\cosh r\,
&=\,2\,\sinh\frac{s-r}2\,\sinh\frac{s+r}2 
\asymp\, \frac{s-r}{1+s-r}\,\frac s{1+s}\,e^{ s}
\end{aligned}
\end{equation}
or
\begin{equation}\label{eqnew}
\begin{aligned}
\cosh s-\cosh r\,
&\asymp\, 
 \begin{cases}
\frac{s^2\!-r^2}{1\,+\,r}\,e^{\hskip.2mm r}
&\text{if \,}r\!\le\!s\!\le\!r\!+\!1\,,\\
\qquad e^{\hskip.2mm s}
&\text{if \,}s\!\ge\!r\!+\!1\,.\\
\end{cases}
\end{aligned}
\end{equation}
By (\ref{eq4}) and (\ref{eq5}) we get
\begin{equation}
\begin{aligned}
|h_\tau (r)|&\lesssim |\tau|^{-1/2}\,e^{-\frac14\mathrm{Re}\,\{Q^2\tau\}    }\,
\int_r^{\infty}\di s
\sqrt{\frac{1+s-r}{s-r}}\,\sqrt{\frac{1+s}s}\,e^{-s/2}\,
\frac s{1+s}\,e^s\times\\
&\times\Big[\frac{1+s}{|\tau|}+\Big(\frac{1+s}{|\tau|}\Big)^{n/2}\Big]\,
e^{-\frac Q2s}\,e^{-\frac s2}\,e^{-\frac14\mathrm{Re}\,\{\frac{s^2}\tau\}}.
\end{aligned}
\end{equation}
After performing the change of variables \,$s\!=\!r\!+\!u$, we obtain 
$$
|h_\tau (r)|\lesssim |\tau|^{-1/2}\,e^{-\frac14\mathrm{Re}\,\{Q^2\tau\}    }\,
\int_0^{\infty}\di u  \sqrt{\frac{1+u}u}\,\sqrt{\frac{r+u}{1+r+u}}\,
e^{-\frac Q2(u+r)}\,e^{-\frac14\mathrm{Re}\,\{\frac{u^2+r^2+2ur}\tau\} }.
$$
Using the following inequalities
$$\frac{\sqrt{r+u\,}}{\sqrt{1+r+u\,}\,}\le1\;,
\quad
1\!+\!r\!+\!u\le(1\!+\!r)\,(1\!+\!u)\,,
$$
we obtain 
\begin{equation}\label{eq7}
|\,h_\tau (r)\,|\,\lesssim\,|\tau|^{-\frac12}\,e^{-\frac Q2r}\,e^{-\frac14\mathrm{Re}\,\{Q^2\tau+\frac{r^2}\tau\}    }
\,\Bigl\{\frac{1+r}{|\tau|}+\Bigl(\frac{1+r}{|\tau|}\Bigr)^{\frac n2}\Bigr\}\,.
\end{equation}
This allows us to obtain the desired estimate when  $|\tau|\!>\!1\!+\!r$\,.

If \,$|\tau|\!\le\!1\!+\!r$\,, in order to prove the estimate \eqref{ker2}, we need to reduce the power \,$\frac n2$  to \,$\frac{n-1}2$\,.
For this purpose, inside \eqref{steven}, let us rewrite 
\begin{equation*}
\mathcal D_1^{(k+1)/2}\,\mathcal D_2^{m/2}
(e^{\,-\frac{s^2}{4\tau}})= P(\tau,s)+R(\tau,s)\,,
\end{equation*}
obtaining
\begin{equation}\label{nucleo}
h_\tau (r)=
C\,(|\tau|e^{i\theta})^{-\frac12}\,e^{-\frac{Q^2\tau}4}\,
{\int_{\,r}^{+\infty}}\di s\,
\frac{\sinh s}{\sqrt{\cosh s\,-\,\cosh r\,}\,}
\,\left[P(\tau,s) + R(\tau,s) \right],
\end{equation}
where
$$P(\tau,s)= C\,\tau^{-\frac{(k+1+m)}2+1}\,s^{\frac{(k+1+m)}2-1}\,\Big(-\frac1{\sinh s}\Big)^{(k+1)/2}\,\Big(-\frac1{\sinh s/2}\Big)^{m/2}\,
\,\frac\partial{\partial s}\bigl(e^{\,-\frac{s^2}{4\tau}}\bigr)$$
and
$R(\tau,s)=\sum_{j=1}^{(k+1+m)/2-1}\tau^{-j}a_j(s)\,e^{\,-\frac{s^2}{4\tau}}$. 
Arguing as above, we can estimate the second term in the \eqref{nucleo} as
\begin{equation}\label{Resto}
\begin{aligned}
&\Big|  |\tau|^{-\frac12}\,e^{-\frac{Q^2\tau}4}\,\int_r^{\infty}\di s\,\frac{\sinh s}{\sqrt{\cosh s\,-\,\cosh r\,}\,} R(\tau,s)\Big|\\&\lesssim |\tau|^{-1/2}\,e^{-\frac14\mathrm{Re}\{{Q^2\tau}\}}\,e^{-\frac Q2r}\,\Big(\frac{1+r}{|\tau|}\Big)^{n/2-1}e^{-\frac14\mathrm{Re}\frac{r^2}\tau}\,.
\end{aligned}
\end{equation}
Hence, it remains to consider the integral
\begin{equation*}
I(\tau,r)={\int_{\,r}^{+\infty}}\di s\,
\frac{\sinh s}{\sqrt{\cosh s\,-\,\cosh r\,}\,}
\,P(\tau,s)\,,
\end{equation*}
\,when \,$|\tau| \le1+r$\,. Let us write
$$I(\tau,r)\,=\;I_1(\tau,r)\,+\;I_2(\tau,r)\,, 
$$
according to the following splitting
$$
\int_{\,r}^{+\infty}
=\int_{\,r}^{\sqrt{r^2+|\tau|\,}}
+\int_{\sqrt{r^2+|\tau|\,}}^{+\infty}.$$
To treat the first integral $I_1$, we differentiate
\,$\frac\partial{\partial s}\bigl(e^{ -\frac{s^2}{4\tau}}\!
\bigr)= -\frac s{2\tau}\,e^{ -\frac{s^2}{4\tau}}$
\,and use the estimates \eqref{eq4}, \eqref{eqnew}
together with the fact that \,$s$ is in $[\,r,r+1\,]$ obtaining
$$
\begin{aligned}
|I_1(\tau,r)|\,
&\lesssim\,|\tau|^{-\frac n2}\,(1+r)^{\frac{n-1}2}\,e^{-\frac Q2r}\,e^{-\frac14\mathrm{Re}\{\frac{r^2}\tau\}}
{\int_{\;r}^{\sqrt{r^2\hskip-.2mm+|\tau|\,}}}\hskip-1mm
\di s\,\frac s{\sqrt{s^2\hskip-.2mm-r^2\,}\,}\,\\
&=\;|\tau|^{-\frac n2+\frac12}\,(1+r)^{\frac{n-1}2}\,e^{-\frac Q2r}\,e^{-\frac14\mathrm{Re}\{\frac{r^2}\tau\}}\,.
\end{aligned}
$$
By integrating by parts in the integral $I_2$, we get
\begin{equation*}
I_2(\tau,r) 
=g(\tau,r)+J(\tau,r) \,,
\end{equation*}
where 
\begin{equation*}
\begin{aligned}
&g(\tau,r)\\
&=\tau^{-\frac n2+1}
\frac{\sinh s}{\sqrt{\cosh s\,-\,\cosh r\,}\,}\,
s^{\frac n2-1}
\Bigl(-\frac1{\sinh s}\Bigr)^{(k+1)/2}\,\Bigl(-\frac1{\sinh s/2}\Bigr)^{m/2}\,
 \bigl(e^{ -\frac{s^2}{4\tau}}\bigr)
\Big|_{s=  \sqrt{r^2+|\tau|} }^{s=+\infty}
\end{aligned}
\end{equation*}
and
\begin{equation*}
\begin{aligned}
J(\tau,r) &= -\tau^{-\frac n2+1}\int_{\sqrt{r^2+|\tau|}}^{+\infty}\di s \,e^{\,-\frac{s^2}{4\tau}}\times\\
&\phantom{= -\tau^{-\frac n2+1}}\times\frac\partial {\partial s}\Big[
\frac{\sinh s}{\sqrt{\cosh s\,-\,\cosh r\,}\,}
s^{\frac n2-1}
\Bigl(-\frac1{\sinh s}\Bigr)^{(k+1)/2}\,\Bigl(-\frac1{\sinh s/2}\Bigr)^{m/2}\,\Big] .  
\end{aligned}
\end{equation*}
We first estimate the boundary term $g(\tau,r)$, in the following way
\begin{equation*}
\begin{aligned}
|g(\tau,r)|&\lesssim\,|\tau|^{-\frac n2+1}\,(1+\sqrt{r^2+|\tau|})^{\frac n2-1}\,e^{-\frac Q2r}\,e^{-\frac14\mathrm{Re}\{\frac{r^2}\tau\} }\\
&\lesssim |\tau|^{-\frac n2+\frac12}\,(1+r)^{\frac{n-1}2}\,e^{-\frac Q2r}\,e^{-\frac14\mathrm{Re}\{\frac{r^2}\tau\}}\qquad\forall |\tau|\leq 1+r.
\end{aligned}
\end{equation*}
Then to estimate the integral term $J$ we write it
$$
J(\tau,r)=J_1(\tau,r)+J_2(\tau,r)\,,
$$
according to 
$$
\int_{ \sqrt{r^2+|\tau|}  }^{+\infty}
=\int_{\sqrt{r^2+|\tau|\,}}^{r+1}
+\int_{r+1}^{+\infty}.$$
By computing the derivative which appears inside the first integral $J_1$ 
and using the elementary estimate $\frac{s\coth s\,-\,1}{\sinh s}\asymp s\,e^{-s}$, we obtain
\begin{equation*}
\begin{aligned}
|J_1(\tau,r)|&\lesssim   |\tau|^{-\frac n2+1}\,\int_{\sqrt{r^2+|\tau|}}^{r+1}\di s 
\,(1+s)^{n/2-2}\,s\,e^{-\frac Q2s}\,e^{-\frac14\mathrm{Re}\{\frac{s^2}\tau\}}\,
\Big\{
e^{-\frac r2}\,e^{\frac s2}\,
\Big(\frac{1+r}{s^2-r^2}\Big)^{1/2} \\
&+e^{-\frac32r}\,e^{\frac32s}\,\,
\Big(\frac{1+r}{s^2-r^2}\Big)^{3/2}
\Big\}\\
&\lesssim|\tau|^{-\frac n2+1}\,e^{-\frac14\mathrm{Re}\{\frac{r^2}\tau\}}\,(1+r)^{\frac{n-3}2}\,e^{-\frac Q2r}\,
\int_{\sqrt{r^2+|\tau|}}^{r+1}\di s 
\Big\{ \frac s{\sqrt{s^2-r^2}}+(1+r)\,\frac s{(s^2-r^2)^{3/2}}\Big\}\\
&\lesssim|\tau|^{-\frac n2+\frac12}\,\,e^{-\frac14\mathrm{Re}\{\frac{r^2}\tau\}}\,e^{-\frac Q2r}\,(1+r)^{\frac{n-1}2}
\qquad \forall |\tau|\leq 1+r\,.
\end{aligned}
\end{equation*}
We can estimate the second  integral $J_2$  as above, changing variable $s=r+u$ and getting
\begin{equation*}
\begin{aligned}
|J_2(\tau,r)|&\lesssim   |\tau|^{-\frac n2+1}\,\int_{r+1}^{\infty}\di s\, 
 (1+s)^{n/2-2}\,s\,e^{-\frac Q2s}\,e^{-\frac14\mathrm{Re}\{\frac{s^2}\tau\} }\\
&\lesssim|\tau|^{-\frac n2+1}\,e^{-\frac14\mathrm{Re}\{\frac{r^2}\tau\}}\, \int_1^\infty\di u \,(1+r+u)^{n/2-2}\,(u+r)\,e^{-\frac Q2(u+r)}\\
&\lesssim|\tau|^{-\frac n2+1}\,\, e^{-\frac14\mathrm{Re}\{\frac{r^2}\tau\}}\,(1+r)^{n/2-1}\,e^{-\frac Q2r}\\
&\lesssim|\tau|^{-\frac n2+\frac12}\,\,e^{-\frac14\mathrm{Re}\{\frac{r^2}\tau\}}\,(1+r)^{\frac{n-1}2}\,e^{-\frac Q2r}
\qquad \forall |\tau|\leq 1+r\,.
\end{aligned}
\end{equation*}
We have, thus, proved that $$I(\tau,r)=O\big(|\tau|^{-\frac n2+\frac12}\,(1+r)^{\frac{n-1}2}\,e^{-\frac Q2r}\,e^{-\frac14\mathrm{Re}\{\frac{r^2}\tau\}  }\big).$$ Then, the first term in \eqref{nucleo} is estimated by
\begin{equation}\label{Itaur}
\big|  |\tau|^{-\frac12}\,e^{-\frac{Q^2}4\tau}\,I(\tau,r) \big|\lesssim 
|\tau|^{-\frac n2}\,(1+r)^{\frac{n-1}2}\,e^{-\frac Q2r}\,e^{-\frac14\mathrm{Re}\{\frac{r^2}\tau\}  }\,,
\end{equation}
which, combined with \eqref{Resto}, allows to conclude the proof.
\end{proof}

\begin{remark}
Notice that, in the case when $\tau\in\mathbb R^+$ estimates from below for the heat kernel $h_\tau $ were proved in \cite[Theorem 3.1]{DM} for real hyperbolic spaces and \cite[Theorem 5.9]{ADY} for Damek-Ricci spaces.  In Lemma \ref{stbelow}, we shall prove an estimate from below for the kernel $h_\tau $ when $\tau\in i\mathbb R\setminus \{0\}$ in a suitable region of the space. 

\end{remark}

As a consequence of the previous pointwise bounds of the kernel $h_\tau $, 
we can estimate its norm in the weak Lorentz spaces.
\begin{corollary}\label{LqKernelEstimate}
Let \,$2\!<\!q\!<\!\infty$ \,and \,$1\!\le\!\alpha\!\le\!\infty$\,.
Then there exists a positive constant \,$C\!$ \,such that
the following kernel estimate holds
\begin{equation}\label{ker}
\|h_\tau \|_{L^{q,\alpha}(S,\lambda)}\leq\,\,
\begin{cases}
C\,|\tau|^{-n/2}\,e^{-\frac{Q^2}4\mathrm{Re}\tau    }
&\text{if}\hspace{3mm}0\!<\!|\tau|\!\le\!1\,,\\
C\,|\tau|^{-3/2}\,e^{-\frac{Q^2}4\mathrm{Re}\tau    }
&\text{if}\hspace{6mm}|\tau|\!>\!1\,.\\
\end{cases}
\end{equation}
\end{corollary}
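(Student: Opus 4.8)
The plan is to read off \eqref{ker} directly from the pointwise bound \eqref{ker2}, using that $h_\tau$ is radial so that its $L^{q,\alpha}(S,\lambda)$ norm is computed against the density $A(r)$ of \eqref{stimaA}. First I would factor out the common exponential: both lines of \eqref{ker2} carry $e^{-\frac14\mathrm{Re}\{Q^2\tau\}}=e^{-\frac{Q^2}4\mathrm{Re}\,\tau}$, and since $\mathrm{Re}\,\tau\ge0$ one has $\mathrm{Re}\frac{r^2}\tau=r^2\frac{\mathrm{Re}\,\tau}{|\tau|^2}\ge0$, so the Gaussian factor $e^{-\frac14\mathrm{Re}\frac{r^2}\tau}\le1$ can simply be discarded. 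Thus, writing $\Phi_\tau$ for the function equal to $|\tau|^{-n/2}(1+r)^{(n-1)/2}e^{-\frac Q2r}$ on $\{|\tau|\le1+r\}$ and to $|\tau|^{-3/2}(1+r)e^{-\frac Q2r}$ on $\{|\tau|>1+r\}$, the statement reduces to proving $\|\Phi_\tau\|_{L^{q,\alpha}(S,\lambda)}\lesssim|\tau|^{-n/2}$ for $0<|\tau|\le1$ and $\lesssim|\tau|^{-3/2}$ for $|\tau|>1$.

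Since the Lorentz scale satisfies $(L^{q_0},L^\infty)_{\theta,\alpha}=L^{q,\alpha}$ with $\frac1q=\frac{1-\theta}{q_0}$ together with the interpolation inequality $\|f\|_{L^{q,\alpha}}\lesssim\|f\|_{L^{q_0}}^{1-\theta}\|f\|_{L^\infty}^{\theta}$, valid for every $\alpha\in[1,\infty]$, I would fix an exponent $q_0$ with $2<q_0<q$ and reduce the whole claim to the two endpoint estimates for $\|\Phi_\tau\|_{L^{q_0}(S,\lambda)}$ and $\|\Phi_\tau\|_{L^\infty(S,\lambda)}$, each of which I expect to satisfy the same two-regime bound. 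This handles all values $1\le\alpha\le\infty$ at once, the weak space $\alpha=\infty$ included, and isolates the role of the hypothesis $q>2$.

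The endpoint computations then split $[0,\infty)$ according to the two cases $|\tau|\le1+r$ and $|\tau|>1+r$ of \eqref{ker2}, using $A(r)\asymp r^{n-1}$ for $0<r\le1$ and $A(r)\asymp e^{Qr}$ for $r\ge1$. For $|\tau|\le1$ only the first case occurs, and $\int_0^\infty(1+r)^{q_0(n-1)/2}e^{-\frac{Qq_0}2r}A(r)\,dr$ converges (polynomially at $0$, exponentially at $\infty$), giving $\|\Phi_\tau\|_{L^{q_0}}\lesssim|\tau|^{-n/2}$, while the $L^\infty$ bound is immediate from $\sup_r(1+r)^{(n-1)/2}e^{-\frac Q2r}<\infty$. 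For $|\tau|>1$ the region $r<|\tau|-1$ contributes $|\tau|^{-3q_0/2}\int_0^{|\tau|-1}(1+r)^{q_0}e^{-\frac{Qq_0}2r}A(r)\,dr\lesssim|\tau|^{-3q_0/2}$, whereas the region $r\ge|\tau|-1$ contributes to $\|\Phi_\tau\|_{L^{q_0}}^{q_0}$ only an exponentially small term $\lesssim|\tau|^{-q_0/2}e^{-c|\tau|}\lesssim|\tau|^{-3q_0/2}$; together these give $\|\Phi_\tau\|_{L^{q_0}}\lesssim|\tau|^{-3/2}$, and the matching $L^\infty$ bound follows by maximizing each case (the second being of size $|\tau|^{-1/2}e^{-\frac Q2(|\tau|-1)}$, hence negligible).

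The one genuinely load-bearing point — and the place where the restriction $q>2$ enters — is the convergence at infinity of the $L^{q_0}$ integrals: the density grows like $e^{Qr}$ while the integrand decays only like $e^{-\frac{Qq_0}2r}$, so integrability, and hence a $\tau$-independent bound for the $r<|\tau|-1$ piece, holds precisely when $\frac{q_0}2-1>0$, i.e. $q_0>2$, which forces $q>2$. Everything else is bookkeeping of the powers of $|\tau|$ and checking that the $r\ge|\tau|-1$ tail is of strictly lower order.
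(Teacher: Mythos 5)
Your argument is correct and reaches the stated bounds, but it takes a somewhat different route from the paper's. The paper's proof (following \cite[Lemma 3.2]{AP}) computes the Lorentz quasi-norm directly: it recalls that for a radial decreasing function $f$ one has $f^*=f\circ V^{-1}$ with $V(r)\asymp r^n$ as $r\to0$ and $V(r)\asymp e^{Qr}$ as $r\to\infty$, rewrites $\|f\|_{L^{q,\alpha}(S,\lambda)}$ as $\bigl[\int_0^\infty \frac{V'(r)}{V(r)}\,\{V(r)^{1/q}f(r)\}^\alpha\,{\mathrm{d}}r\bigr]^{1/\alpha}$, and substitutes the pointwise bound \eqref{ker2}. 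You instead discard the nonnegative Gaussian exponent, majorize $|h_\tau|$ by the two-regime profile $\Phi_\tau$, and deduce the $L^{q,\alpha}$ bound from the two Lebesgue endpoints $L^{q_0}$ (with $2<q_0<q$) and $L^\infty$ via the real-interpolation inequality $\|f\|_{L^{q,\alpha}}\lesssim\|f\|_{q_0}^{1-\theta}\|f\|_\infty^{\theta}$. The two proofs rest on exactly the same inputs (Proposition \ref{PointwiseKernelEstimates} and the volume asymptotics encoded in \eqref{stimaA}), and both locate the role of the hypothesis $q>2$ in the same place, namely the competition between the growth $e^{Qr}$ of the measure and the decay $e^{-qQr/2}$ of the integrand at infinity. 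Your version has the minor advantage of not requiring the majorant to be radially decreasing --- the profile $(1+r)^{(n-1)/2}e^{-Qr/2}$ is in fact increasing near $r=0$, so the paper's appeal to $f^*=f\circ V^{-1}$ implicitly needs a harmless monotonization step --- at the price of introducing the auxiliary exponent $q_0$ and an interpolation lemma; the bookkeeping of the powers of $|\tau|$ in the two regimes $|\tau|\le1+r$ and $|\tau|>1+r$, including the exponentially small tail for $|\tau|>1$, is the same in both treatments and is carried out correctly in yours.
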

\begin{proof}
It suffices to argue as in \cite[Lemma 3.2]{AP}. We recall that
the Lorentz spaces $L^{q, \alpha}(S,\lambda)$ are the spaces of functions $f$ such that :\\
if $1\leq \alpha<\infty$, then 
$$\|f\|_{L^{q,\alpha}(S,\lambda)}
=\Bigl[\,{\int_{\,0}^{+\infty}}\!\frac{ds}s\,
\{s^{1/q}f^*(s)\}^\alpha\,\Bigr]^{1/\alpha} 
=\Bigl[\,{\int_{\,0}^{+\infty}}\!dr\,
\frac{V'(r)}{V(r)}\,\{V(r)^{1/q}f(r)\}^\alpha\,\Bigr]^{1/\alpha}<\infty,$$
or, if $\alpha = \infty$, then
$$\|f\|_{L^{q,\infty}(S,\lambda)}
=\sup_{\,s>0}s^{1/q}f^*(s)
=\sup_{\,r>0}V(r)^{1/q}f(r)<\infty,$$
 where  the decreasing function $f^*$ is the rearrangement of $f$.\\
Notice that, if $f$ is a radial decreasing function, then 
$f^*=f\circ V^{-1},$ where
\begin{equation*}
\begin{aligned}
V(r)&=C \!{\int_{0}^{r}}\di s\, \sinh^{m+k}(s/2)\,\cosh^k(s/2)\\
&\asymp 
\begin{cases}
& r^n\quad \text{as} \,\, r \to 0 \\
& e^{Qr}\quad \text{as} \,\, r\to+\infty.
\end{cases}
\end{aligned}
\end{equation*}
Replacing $f$ by $h_\tau $  and using the kernel estimate \eqref{ker2}, we conclude the proof. \end{proof}

\section{Schr\"odinger equation on Damek-Ricci spaces}
Consider the homogeneous Cauchy problem for the linear Schr\"odinger equation 
associated with the Laplace-Beltrami operator on a Damek-Ricci space $S$
\begin{equation}\label{HS}
\begin{cases}
\;i\,\partial_tu(t,x)+\Delta_Su(t,x)=0\,\\
\;u(0,x)=f(x)\,,\quad x\in S\,,\\
\end{cases}
\end{equation}
whose solution is given by
\begin{equation*}
u(t,x)
=e^{\hspace{.25mm}i\hspace{.25mm}t\hspace{.25mm}\Delta_S}f(x)
=f\!*\!s_{t}(x)\,,
\end{equation*}
where we denote by $s_t$ the kernel $h_{it}$ for any $t\in\mathbb R\setminus \{0\}$. Our aim is to study the dispersive properties of $e^{it\Delta_S}$, and to do so we follow the strategy used in \cite{AP}, where the authors applied the Kunze-Stein phenomenon. Notice that, for general functions on Damek-Ricci spaces,  
this phenomenon is 
known to be false 
\cite{Co1, CF, Li, Lo}. To solve this difficulty, we define 
suitable spaces of radial functions on $S$ which have a ``nice'' 
convolution property.
\begin{definition}
For $s$ in $[2,\infty)$, we define $\mathcal A_s$ as the space of all radial 
function $\kappa$ on $S$ such that
$$  
\int_0^{\infty}\di \,|r\kappa(r)|^{s/2}\,\phi_0(r)\,A(r)<\infty \,,
$$
where $A(r)$ is the radial density of the left measure, introduced in \eqref{stimaA}. Given $\kappa$ in $\mathcal A_s$, set 
$$\|\kappa\|_{\mathcal A_s}=\Big(\int_0^{\infty}\di r\,|\kappa(r)|^{s/2}\,\phi_0(r)\,A(r)\,  \Big)^{2/s}\,.$$

For $s=\infty$ we denote by $\mathcal A_{\infty}$ the space of $L^{\infty}(S,\lambda)$ radial functions on $S$ and by $\|\cdot\|_{\mathcal A_{\infty}}$ the $L^{\infty}$-norm.
\end{definition}
We observe that $\mathcal A_s$ may be identified with the weighted space 
$L^{s/2}\big((0,\infty),\phi_0(r)\,A(r)\di r \big)$.

\begin{theorem}\label{convolution}
For any $q\in [2,\infty]$ we have that
$$
L^{q'}(S,\lambda)\ast \mathcal A_q\subset L^q(S,\lambda).
$$
More precisely, there exists a constant $C_q$ such that  for every function $f$ in $L^{q'}(S,\lambda)$ and $\kappa$ in $\mathcal A_q$ 
$$
\|f\ast \kappa\|_{L^q(S,\lambda)}\leq C_q \,\|\kappa\|_{\mathcal A_q}\, \|f\|_{L^{q'}(S,\lambda)}.
$$
\end{theorem}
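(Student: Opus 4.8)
The plan is to reduce everything to the two endpoint cases $q=\infty$ and $q=2$ and then recover the whole range $q\in[2,\infty]$ by bilinear complex interpolation, in the spirit of \cite{AP}. The conceptual point is that, although the Kunze-Stein phenomenon fails on $S$ for general functions, its radial version survives provided the radial factor is measured in the weighted norm $\|\cdot\|_{\mathcal A_q}$; the spaces $\mathcal A_q$ are designed precisely so that this weighted Kunze-Stein estimate closes up under interpolation.

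The endpoint $q=\infty$ is elementary. Writing $f\ast\kappa(x)=\int_Sf(xy)\,\kappa(y^{-1})\,\di\lambda(y)$ and using that $\kappa$ is radial, so that $\kappa(y^{-1})=\kappa(y)$, one bounds $|f\ast\kappa(x)|\le\|\kappa\|_{\mathcal A_\infty}\int_S|f(xy)|\,\di\lambda(y)$. Since $\lambda$ is left invariant, the last integral equals $\|f\|_{L^1(S,\lambda)}=\|f\|_{L^{q'}(S,\lambda)}$, which gives the claim with $C_\infty=1$.

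For the endpoint $q=2$ I would use spherical analysis. Identifying $\mathcal A_2$ with $L^1\big((0,\infty),\phi_0(r)A(r)\,\di r\big)$, the norm reads $\|\kappa\|_{\mathcal A_2}=\int_S\phi_0\,|\kappa|\,\di\lambda$. Right convolution by a radial $\kappa$ is a spectral multiplier of the self-adjoint operator $-\Delta_Q$ on $L^2(S,\lambda)$: by \eqref{Hk} it equals $m(-\Delta_Q)$ with $m(s^2)=\mathcal H\kappa(s)$, so its $L^2$ operator norm is $\sup_{s\ge0}|\mathcal H\kappa(s)|$. Invoking the standard pointwise bound $|\phi_s|\le\phi_0$ for real $s$, one obtains
$$
|\mathcal H\kappa(s)|=\Big|\int_S\phi_s\,\kappa\,\di\lambda\Big|\le\int_S\phi_0\,|\kappa|\,\di\lambda=\|\kappa\|_{\mathcal A_2}\,,
$$
whence $\|f\ast\kappa\|_{L^2(S,\lambda)}\le\|\kappa\|_{\mathcal A_2}\,\|f\|_{L^2(S,\lambda)}$; by density it suffices to treat nice radial $\kappa$, for which $\kappa=k_{m(-\Delta_Q)}$.

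Finally, I would view $(f,\kappa)\mapsto f\ast\kappa$ as a bilinear map and interpolate between the two endpoints $L^2(S,\lambda)\times\mathcal A_2\to L^2(S,\lambda)$ and $L^1(S,\lambda)\times\mathcal A_\infty\to L^\infty(S,\lambda)$. Writing $\mathcal A_q=L^{q/2}(w\,\di r)$ with $w=\phi_0A$, and using that a fixed weight is preserved under complex interpolation, bilinear complex interpolation at $\theta=1-\frac2q$ sends the first factor to $L^{q'}(S,\lambda)$, the second to $\mathcal A_q$, and the target to $L^q(S,\lambda)$, which is exactly the assertion with $C_q$ bounded by the geometric mean of the two endpoint constants. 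The main obstacle, and the only place where radiality is genuinely used, is the $q=2$ estimate: it is the surviving radial form of the Kunze-Stein phenomenon, and it is this step that dictates the weight $\phi_0(r)A(r)$ in the definition of $\mathcal A_q$.
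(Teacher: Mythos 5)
Your proof is correct and follows essentially the same route as the paper's: establish the trivial endpoint $q=\infty$, the radial Kunze--Stein endpoint $q=2$, and then identify $\mathcal A_q$ with $L^{q/2}\big((0,\infty),\phi_0 A\,\di r\big)$ and apply bilinear complex interpolation. The only (harmless) difference is that where the paper simply cites \cite[Theorem (3.3)]{ADY} for the $q=2$ case, you re-derive it via the spectral theorem for $-\Delta_Q$ and the bound $|\phi_s|\le\phi_0$, which is precisely the content of that cited result.
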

\begin{proof}
The case when $q=2$ follows by \cite[Theorem (3.3)]{ADY}. When $q=\infty$, taking $f$ in $L^1(S,\lambda)$ 
and $\kappa$ in $\mathcal A_{\infty}$, we have that for every $x$ in $S$
$$
|f\ast \kappa(x) |\leq \int_S |f(y)| \,|\kappa(y^{-1}x)|\,\di\lambda (y)\leq 
\|k\|_{\infty}\,\|f\|_{L^1(S,\lambda)}=\|k\|_{\mathcal A_{L^{\infty}}}\,\|f\|_{L^1(S,\lambda)}.
$$
By interpolating between the case $q=2$ and $q=\infty$ we obtain that
\begin{equation}\label{conv}
[L^2(S,\lambda),L^1(S,\lambda)]_{\theta}\,\ast\, [\mathcal{A}_2,\mathcal{A}_{\infty}]_{\theta}\subset [L^2(S,\lambda),L^{\infty}(S,\lambda)]_{\theta}=L^q(S,\lambda)
\end{equation}
where $1/q=(1-\theta)/2$, with $\theta\in (0,1)$ (see \cite[Theorem 5.1.1]{BL}). Moreover, by \cite[Theorem 5.1.1]{BL}
$$[L^2(S,\lambda),L^1(S,\lambda)]_{\theta}=L^{q'}(S,\lambda)$$ and 
$$
[L^1\big((0,\infty),\phi_0\,A\di r\big),L^\infty\big((0,\infty),\phi_0\,A\di r \big)]_{\theta}=L^{q/2}\big((0,\infty),\phi_0\,A \di r \big).
$$
We then have $[\mathcal A_2,\mathcal A_{\infty}]_{\theta}=\mathcal A_q$, which combined with (\ref{conv}) implies the theorem.
\end{proof}
 
Let us turn to $L^q\to L^{\tilde{q}'}$ dispersive properties of the propagator $e^{\hspace{.25mm}\hspace{.25mm}it\hspace{.25mm}\Delta_S}$
on $S$.

\begin{theorem}\label{dispersive}
Let \,$2\!<\!q,\tilde q\!\le\!\infty$\,.
Then there exists a positive constant \,$C\!$ 
\,such that, for all $t\in\mathbb R\setminus \{0\}$, the following dispersive estimates hold\;
\begin{equation*}
\|\,e^{\hspace{.25mm}\hspace{.25mm}it\hspace{.25mm}\Delta_S}\,
\|_{L^{\tilde q'}(S,\lambda)\!\to L^q(S,\lambda)}\le\,\,
\begin{cases}
C\,|t|^{-\max\hspace{.25mm}\{\frac12-\frac1q,\frac12-\frac1{\tilde q}\}\,n}\, 
&\text{if}\hspace{3mm}0\!<\!|t|\!\leq\!1\,,\\
C\,|t|^{-\frac32}\, 
&\text{if}\hspace{6mm}|t|\!>\!1\,.\\
\end{cases}
\end{equation*}
\end{theorem}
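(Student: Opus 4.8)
The plan is to read the asserted bounds off the kernel estimates already available and then to propagate them to the full off‑diagonal range $2<q,\tilde q\le\infty$ by interpolation and duality. Throughout I write $T_t=e^{\,it\Delta_S}$, so that $T_tf=f\ast s_t$ with $s_t=h_{it}$ radial, $T_t^{\ast}=T_{-t}$, and $\|T_t\|_{L^2\to L^2}=1$ by spectral theory. Since $|s_{-t}|=|s_t|$, every kernel bound is insensitive to the sign of $t$, and duality gives $\|T_t\|_{L^{\tilde q'}\to L^q}=\|T_{-t}\|_{L^{q'}\to L^{\tilde q}}$. As the exponent $\max\{\tfrac12-\tfrac1q,\tfrac12-\tfrac1{\tilde q}\}$ is symmetric in $q$ and $\tilde q$, it therefore suffices to establish the estimate when $\tilde q\ge q$, in which case the maximum equals $\tfrac12-\tfrac1{\tilde q}$; the remaining pairs will follow by applying this case to $T_{-t}$.

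I would first record two endpoint estimates, both with target space $L^q$. For the $L^1\to L^q$ endpoint, Young's inequality on $S$ (in the form adapted to the left Haar measure, as in the convolution structure behind Theorem~\ref{convolution}) gives $\|f\ast s_t\|_{L^q}\lesssim\|f\|_{L^1}\,\|s_t\|_{L^q}$, and Corollary~\ref{LqKernelEstimate} (with $\alpha=q$) bounds $\|s_t\|_{L^q}$ by $|t|^{-n/2}$ for $0<|t|\le1$ and by $|t|^{-3/2}$ for $|t|>1$. For the diagonal $L^{q'}\to L^q$ endpoint the two time regimes must be treated separately: reading off $\|s_t\|_\infty$ from \eqref{ker2} (where $\mathrm{Re}\,\tau=0$ kills the exponential factor and $\sup_r(1+r)^{(n-1)/2}e^{-\frac Q2 r}<\infty$) yields $\|T_t\|_{L^1\to L^\infty}\lesssim|t|^{-n/2}$ for $|t|\le1$, and interpolating this with the isometry $\|T_t\|_{L^2\to L^2}=1$ produces the sharper small‑time bound $\|T_t\|_{L^{q'}\to L^q}\lesssim|t|^{-(\frac12-\frac1q)n}$; for $|t|>1$ this interpolation is too weak, so instead I would invoke Theorem~\ref{convolution} together with $\|s_t\|_{\mathcal A_q}\lesssim|t|^{-3/2}$ (obtained by feeding \eqref{ker2} into the definition of $\|\cdot\|_{\mathcal A_q}$ and integrating against $\phi_0(r)A(r)$) to get $\|T_t\|_{L^{q'}\to L^q}\lesssim|t|^{-3/2}$.

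Next I would interpolate these two endpoints while keeping the target space $L^q$ \emph{fixed}; because both land in $L^q$, complex interpolation leaves the target unchanged and slides the source from $L^1$ to $L^{q'}$, thereby reaching every $L^{\tilde q'}$ with $\tilde q\ge q$. For $0<|t|\le1$ the interpolated norm is $\bigl(|t|^{-n/2}\bigr)^{\theta}\bigl(|t|^{-(\frac12-\frac1q)n}\bigr)^{1-\theta}$, and the bookkeeping $\tfrac1{\tilde q'}=(1-\theta)\tfrac1{q'}+\theta$ gives $1-\theta=q/\tilde q$, which collapses the exponent exactly to $(\tfrac12-\tfrac1{\tilde q})n$. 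For $|t|>1$ both endpoints carry the factor $|t|^{-3/2}$, so the interpolated bound is again $|t|^{-3/2}$, uniformly in $q$ and $\tilde q$. This settles all pairs with $\tilde q\ge q$, and the case $\tilde q<q$ follows from the duality identity of the first paragraph.

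The conceptual heart of the argument is the large‑time regime: the improvement from the local Euclidean rate $|t|^{-n/2}$ to the $(q,\tilde q)$‑independent rate $|t|^{-3/2}$ — a reflection of the exponential volume growth recorded in the second branch of \eqref{ker2} — is exactly what widens the admissible set beyond the Euclidean interval. I expect the main technical obstacle to lie not in the interpolation scheme but in setting up the convolution estimates correctly on the nonunimodular group $S$: since the left and right Haar measures differ by the modular function $\delta$, the weights appearing in Young's inequality, in $\|\cdot\|_{L^q(S,\lambda)}$, and in $\|\cdot\|_{\mathcal A_q}$ must be matched carefully, which is precisely the role of the density $\phi_0(r)A(r)$ in Theorem~\ref{convolution}. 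Once that matching is secured, the small‑time bounds reduce to the familiar Euclidean ones and the full statement assembles by interpolation and duality.
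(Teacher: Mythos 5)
Your proposal is correct and follows essentially the same route as the paper: the same endpoint bounds ($L^1\to L^q$ from Corollary \ref{LqKernelEstimate}, the diagonal $L^{q'}\to L^q$ bound via the unitarity of $e^{it\Delta_S}$ for small time and via Theorem \ref{convolution} with $\|s_t\|_{\mathcal A_q}\lesssim|t|^{-3/2}$ for large time), assembled by interpolation. The only difference is cosmetic bookkeeping — you fix the target $L^q$ and dispose of the case $\tilde q<q$ by duality, whereas the paper interpolates directly among the three estimates $L^1\to L^q$, $L^{q'}\to L^\infty$ and $L^2\to L^2$ (resp.\ $L^{q'}\to L^q$) — and the exponent arithmetic you carry out checks out.
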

\begin{proof}
For $0<|t|\leq 1$, by applying Corollary \ref{LqKernelEstimate}, we obtain
\begin{equation*}
\begin{cases}
\;\|\,e^{\hspace{.25mm}\hspace{.25mm}it\hspace{.25mm}\Delta_S}\,\|
_{L^1(S,\lambda)\to L^q(S,\lambda)}
=\|s_{t}\|_{L^q(S,\lambda)}\le C\;|t|^{-\frac n2}\, 
\quad\forall\;q\!>\!2\,,\\
\;\|\,e^{\hspace{.25mm}\hspace{.25mm}it\hspace{.25mm}\Delta_S}\,\|
_{L^{q'}(S,\lambda)\!\to L^{\infty}(S,\lambda)}\!
=\|s_{t}\|_{L^q(S,\lambda)}\le C\;|t|^{-\frac n2}\,
\quad\forall\;q\!>\!2\,,\\
\;\|\,e^{\hspace{.25mm}\hspace{.25mm}it\hspace{.25mm}\Delta_S}\,\|
_{L^2(S,\lambda)\to L^2(S,\lambda)}=1 \,.\\
\end{cases}
\end{equation*}
By interpolating the previous estimates, we deduce the desired result. 

When $|t|>1$, we apply Corollary \ref{LqKernelEstimate} and Theorem \ref{convolution} obtaining 
\begin{equation}\label{eitD}
\begin{cases}
\;\|\,e^{\hspace{.25mm}\hspace{.25mm}it\hspace{.25mm}\Delta_S}\,\|
_{L^1(S,\lambda)\to L^q(S,\lambda)}
=\|s_{t}\|_{L^q(S,\lambda)}\le C\;|t|^{-\frac32}\, 
\quad\forall\;q\!>\!2\,,\\
\;\|\,e^{\hspace{.25mm}\hspace{.25mm}it\hspace{.25mm}\Delta_S}\,\|
_{L^{q'}(S,\lambda)\!\to L^{\infty}(S,\lambda)}\!
=\|s_{t}\|_{L^q(S,\lambda)}\le C\;|t|^{-\frac32}\,
\quad\forall\;q\!>\!2\,,\\
\;\|\,e^{\hspace{.25mm}\hspace{.25mm}it\hspace{.25mm}\Delta_S}\,\|
_{L^{q'}(S,\lambda)\!\to L^q(S,\lambda)}\!
\le C_q\,\|s_{t}\|_{\mathcal A_q} 
\quad\forall\;q\!>\!2\,.\\
\end{cases}
\end{equation}
For any $2<q\leq \infty$, we have to estimate the $\mathcal A_q$-norm of the kernel $s_t$. To do so, we use Proposition \ref{PointwiseKernelEstimates}, formula  \eqref{stimaA} and the following inequality  (see \cite[Lemma 1]{A2})
$$
\phi_0(r)\leq C\,(1+r)\,e^{-\frac Q2r}\,,
$$
from which we deduce that $s_{t}$ lies in the space $\mathcal A_q$ and 
$$
\|s_{t}\|_{\mathcal A_q}\lesssim \,\,|t|^{-3/2}\qquad \forall |t|>1\, .
$$
Using the previous estimate in \eqref{eitD} and applying  interpolation, we conclude the proof when $|t|$ is large. 

\end{proof}
 
Finally, by combining dispersive estimates (proved in Theorem \ref{dispersive}) with the classical $TT^*$ method in the same way of \cite[Theorem 3.6]{AP}, 
we deduce the Strichartz estimates for a large family of admissible pairs.

\begin{theorem}\label{1}
Consider  the Cauchy Problem for the linear Schr\"odinger equation
\begin{equation*}
 \begin{cases}
  & i \partial_t u(t,x) + \Delta_S u(t,x) =F(t,x) \\
  & u(0,x)=f(x),\; \,x \in S.\\
 \end{cases}
\end{equation*}
If $(\frac1p,\frac1q)$ and $(\frac1{\tilde{p}},\frac1{\tilde{q}})$ lie in the admissible triangle 
\begin{equation}\label{Tn}
T_n=\Big\{
\Big(\frac1p,\frac1q\Big)\in \Bigl(0,\frac12\Bigr]\times \Bigl(0,\frac12\Bigr) :~
\frac2{p}  +\frac n{q}\geq \frac n2
\Big\} \cup 
\Big\{ \Bigl(0, \frac12 \Bigr)  \Big\},
\end{equation}
 then 
the solution 
$$u(t,x)=e^{it{\Delta_S}} f(x) + \int_0^t {\rm{\di}} s\, e^{i(t-s){\Delta_S}}F(s,x)$$
satisfies the following Strichartz estimates 
\begin{equation*}
\|u\|_{L^p(\mathbb R;L^q(S,\lambda))}\lesssim \|f\|_{L^2(S,\lambda)}+\|F\|_{L^{\tilde{p}'}(\mathbb R; L^{\tilde{q}'}(S,\lambda))}\,.
\end{equation*}
\end{theorem}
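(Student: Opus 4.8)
The plan is to follow the $TT^*$ method, exactly as in \cite[Theorem 3.6]{AP}, feeding in the dispersive estimates of Theorem \ref{dispersive} and exploiting a splitting of the time variable into a \emph{local} region $|t-s|\le1$ and a \emph{global} region $|t-s|>1$. First I would recall that $e^{it\Delta_S}$ is unitary on $L^2(S,\lambda)$, so that the homogeneous estimate $\|e^{it\Delta_S}f\|_{L^p(\mathbb R;L^q)}\lesssim\|f\|_{L^2}$ is, by duality and the standard $TT^*$ identity, equivalent to the boundedness of the operator $G\mapsto\int_{\mathbb R}e^{i(t-s)\Delta_S}G(s)\di s$ from $L^{p'}(\mathbb R;L^{q'})$ to $L^p(\mathbb R;L^q)$. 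Applying the dispersive bound of Theorem \ref{dispersive} with $\tilde q=q$ and Minkowski's inequality reduces this to the one-dimensional convolution inequality $\|K*g\|_{L^p(\mathbb R)}\lesssim\|g\|_{L^{p'}(\mathbb R)}$, where the time kernel satisfies $K(t)\lesssim|t|^{-\sigma}$ with $\sigma=n(\tfrac12-\tfrac1q)$ on $|t|\le1$ and $K(t)\lesssim|t|^{-3/2}$ on $|t|>1$.

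The two regions are then handled by different tools. For the global part, $\mathbf 1_{\{|t|>1\}}|t|^{-3/2}$ lies in $L^1(\mathbb R)\cap L^\infty(\mathbb R)$, hence in $L^{p/2}(\mathbb R)$ for every $p\ge2$; Young's inequality with exponent $a=p/2$, i.e. $\tfrac1a=\tfrac2p$, then gives the required $L^{p'}\to L^p$ bound with no further restriction. This integrability at infinity is the ``gain'' responsible for enlarging the admissible range beyond the Euclidean one. For the local part, the truncated kernel $\mathbf 1_{\{|t|\le1\}}|t|^{-\sigma}$ belongs to $L^a(\mathbb R)$ for every $a<1/\sigma$; Young's inequality with $\tfrac1a=\tfrac2p$ applies precisely when $\tfrac2p>\sigma$, that is on the \emph{interior} $\tfrac2p+\tfrac nq>\tfrac n2$ of $T_n$, while the \emph{edge} $\tfrac2p+\tfrac nq=\tfrac n2$ corresponds to the critical exponent $\sigma=\tfrac2p$ and is covered by the sharp Hardy--Littlewood--Sobolev inequality (with the weak-type kernel $|t|^{-\sigma}\in L^{p/2,\infty}$). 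The remaining vertex $(0,\tfrac12)$ is just the energy identity $\|e^{it\Delta_S}f\|_{L^\infty(L^2)}=\|f\|_{L^2}$.

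For the inhomogeneous term I would first pass from the full convolution to the retarded integral $\int_0^t$ by means of the Christ--Kiselev lemma, which applies throughout $T_n$ except at the edge endpoint $p=2$. At that endpoint, and for the off-diagonal pairs $(p,q)\ne(\tilde p,\tilde q)$ (where the local kernel carries the exponent $\sigma=\max\{\tfrac12-\tfrac1q,\tfrac12-\tfrac1{\tilde q}\}n$), I would instead invoke the abstract bilinear summation of Keel--Tao \cite{KT}, adapted to the present two-regime dispersive bound exactly as in \cite{AP}; the global piece again contributes an honestly bounded ($L^1$-kernel) operator, so only the local piece enters the delicate interpolation.

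The main obstacle is precisely this endpoint bookkeeping. One must verify that the combination of the local (Hardy--Littlewood--Sobolev) and global (Young) estimates, together with the bilinear argument, covers the full closed triangle $T_n$ without losing the off-diagonal inhomogeneous bounds — in particular at the critical corner $(p,q)=\bigl(2,\tfrac{2n}{n-2}\bigr)$, where $p=2$ meets the lower edge and the Keel--Tao endpoint machinery is genuinely needed. Once this is in place, everything else is a routine adaptation of the Euclidean and real-hyperbolic arguments.
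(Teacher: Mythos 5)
Your proposal is correct and takes essentially the same route as the paper, whose entire proof consists of the remark that the result follows by combining the dispersive estimates of Theorem \ref{dispersive} with the classical $TT^*$ method as in \cite[Theorem 3.6]{AP}. Your local/global splitting of the time kernel, the Young and Hardy--Littlewood--Sobolev treatment of the two regimes (with the integrable $|t|^{-3/2}$ tail accounting for the enlarged admissible triangle), and the Keel--Tao and Christ--Kiselev ingredients for the endpoint and the retarded integral are precisely the content of that cited argument.
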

\begin{remark}
The applications to well-posedness and scattering theory for the NLS obtained in \cite[Section 4, 5]{AP} on real hyperbolic spaces, can be easily generalized to all Damek-Ricci spaces. We omit the details.
\end{remark}

\section{An application: the Schr\"odinger equation  associated with $\mathcal L$}
We now consider the homogeneous Cauchy problem for the linear Schr\"odinger equation
on a Damek-Ricci space $S$ associated with the distinguished Laplacian $\mathcal L$
\begin{equation*}
 \begin{cases}
  & i \partial_t u (t,x)+ \mathcal L u (t,x)=0 \\
  & u(0,x)=f(x),\; \,x \in S,
 \end{cases}
\end{equation*}
whose solution is given by
$$
u(t,x)=f\ast \sigma_t(x)\,,
$$
where $\sigma_t$ is the convolution kernel of the 
operator 
$e^{it\mathcal L}$. 

It is interesting to observe that $L^1-L^{\infty}$ dispersive estimate  for the Schr\"odinger equation 
associated with the Laplacian $\mathcal L$ does not hold. To show this fact, we first prove that the kernel $\sigma_t$ is not in $L^{\infty}$. More precisely, we estimate from below the kernel $s_{t}$ and we use the relationship (\ref{relazionenuclei}) between kernels of multipliers of $-\Delta_S$ and $-\mathcal L$, 
\begin{equation}\label{sigmat} 
\sigma_t=\delta^{1/2}\,e^{i\frac{Q^2t}4}\,s_{t},
\end{equation}
where $\delta$ is the modular function on $S$.

\begin{lemma}\label{stbelow}
For every $t$ in $\mathbb R\setminus\{0\}$ there exist positive constants $K$ and $c$, with $c>1$, such that
$$
|s_{t}(r)|\geq K\,|t|^{-n/2}\,r^{\frac{n-1}2}\,e^{-\frac Q2r}\qquad
\forall r> 1+c|t|.
$$
\end{lemma}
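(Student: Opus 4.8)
The plan is to isolate, for $\tau=it$, the principal (top-order) term of the kernel $s_t=h_{it}$ and to show that it already has modulus comparable to the right-hand side of the claimed bound, while the remaining terms are, in the region $r>1+c|t|$, at most half of it. The decisive simplification is that for purely imaginary $\tau$ every Gaussian and every phase factor occurring in \eqref{steven} has modulus one: $|e^{-Q^2\tau/4}|=|e^{-s^2/(4\tau)}|=1$ and $|\tau|^{-1/2}=|t|^{-1/2}$. Hence the size of $|s_t(r)|$ is governed entirely by the polynomial-in-$r$, power-of-$|t|$ amplitudes, and the only enemy is cancellation among terms carrying different powers of $(it)^{-1}=i^{-1}t^{-1}$. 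I would treat the two parities of $k$ separately, as in Proposition \ref{PointwiseKernelEstimates}.

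For $k$ even I would write $s_t$ via \eqref{steven} and \eqref{derivativepq}. The top term $j=(m+k)/2$ in \eqref{derivativepq} is obtained by letting every derivative fall on the Gaussian; a direct recursion shows that its coefficient is $a_{(m+k)/2}(r)=\bigl(\tfrac{r}{2\sinh r}\bigr)^{k/2}\bigl(\tfrac{r}{2\sinh(r/2)}\bigr)^{m/2}$, which is positive and, using $\tfrac{r}{2\sinh r}\ge r\,e^{-r}$ and $\tfrac{r}{2\sinh(r/2)}\ge r\,e^{-r/2}$, satisfies $a_{(m+k)/2}(r)\ge r^{(m+k)/2}e^{-Qr/2}$ for all $r>0$. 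The lower-order coefficients obey $|a_j(r)|\lesssim(1+r)^j e^{-Qr/2}$, as recorded after \eqref{derivativepq}. By the reverse triangle inequality,
$$|s_t(r)|\ge C\,|t|^{-\frac12}e^{-\frac Q2 r}\Bigl\{|t|^{-\frac{m+k}2}a_{\frac{m+k}2}(r)-\sum_{j<\frac{m+k}2}|t|^{-j}|a_j(r)|\Bigr\}.$$
Since $r>1$ gives $(1+r)/r\le2$, the ratio of the $j$-th subtracted term to the top term is $\lesssim (|t|/r)^{\frac{m+k}2-j}$, and $r>1+c|t|$ forces $|t|/r<1/c$; choosing $c$ large enough (depending only on $m,k$) makes the subtracted sum at most half of the top term. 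As $\tfrac12+\tfrac{m+k}2=\tfrac n2$ and $\tfrac{m+k}2=\tfrac{n-1}2$, this yields $|s_t(r)|\ge \tfrac C2\,|t|^{-n/2}r^{(n-1)/2}e^{-Qr/2}$.

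For $k$ odd I would reuse the splitting of \eqref{nucleo} into its principal part $I(it,r)$ and the remainder coming from $R(\tau,s)$. The remainder is already controlled by the bound \eqref{Resto}, which for $\tau=it$ reads $\lesssim|t|^{-(n-1)/2}(1+r)^{n/2-1}e^{-Qr/2}$; compared with the target $|t|^{-n/2}r^{(n-1)/2}e^{-Qr/2}$ its ratio is $\lesssim|t|^{1/2}r^{-1/2}<c^{-1/2}$, negligible for $c$ large. It then suffices to bound $|I(it,r)|$ from below by $\gtrsim|t|^{-(n-1)/2}r^{(n-1)/2}e^{-Qr/2}$. Writing $P(it,s)=C'(it)^{-n/2}\psi(s)e^{is^2/(4t)}$ with $|\psi(s)|\asymp s^{n/2}e^{-(Q+1)s/2}$, substituting $s=r+u$ and factoring the phase as $e^{ir^2/(4t)}e^{iru/(2t)}e^{iu^2/(4t)}$, the amplitude $\frac{\sinh s}{\sqrt{\cosh s-\cosh r}}\,\psi(s)$ behaves near $u=0$ like $\sqrt{\sinh r}\,\psi(r)\,u^{-1/2}$ by \eqref{eq4}--\eqref{eq5}. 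The non-oscillatory endpoint singularity produces, through the Fresnel integral $\int_0^\infty u^{-1/2}e^{iru/(2t)}\di u=\sqrt{2\pi|t|/r}\,e^{\pm i\pi/4}$, a leading term of modulus $\asymp\sqrt{\sinh r}\,|\psi(r)|\sqrt{|t|/r}\asymp|t|^{1/2}r^{(n-1)/2}e^{-Qr/2}$, which does not cancel.

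The step I expect to be the main obstacle is exactly this last lower bound for the oscillatory integral $I(it,r)$: unlike the even case, where the reverse triangle inequality suffices, here one must rule out cancellation in $\int_r^\infty\frac{\sinh s}{\sqrt{\cosh s-\cosh r}}\,e^{is^2/(4t)}\psi(s)\,\di s$. The point is that the large parameter $r/|t|>c$ concentrates the integral near the endpoint $s=r$, where the integrable $u^{-1/2}$ singularity yields a definite Fresnel contribution, while all corrections are smaller by a factor $\lesssim c^{-1/2}$: the replacement of the amplitude by its value at $u=0$, the factor $e^{iu^2/(4t)}$ (harmless since $u^2/|t|\lesssim|t|/r^2\ll1$ on the effective scale $u\sim|t|/r$), and the tail $u\gtrsim1$. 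Taking $c>1$ large enough throughout then delivers the asserted lower bound in both parities.
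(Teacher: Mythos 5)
Your treatment of the even case is correct and is essentially the paper's argument: isolate the top-order term $j=(m+k)/2$ of \eqref{derivativepq}, whose coefficient $\bigl(\tfrac{r}{2\sinh r}\bigr)^{k/2}\bigl(\tfrac{r}{2\sinh(r/2)}\bigr)^{m/2}$ is positive and bounded below by $r^{(n-1)/2}e^{-Qr/2}$, and absorb the lower-order terms by taking $c$ large, each of them carrying at least one extra factor $|t|/r<1/c$. Likewise, your reduction of the odd case to a lower bound for the principal oscillatory integral $I(it,r)$, with the remainder disposed of by \eqref{Resto}, is the paper's reduction.

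Where you diverge is in the lower bound for $I(it,r)$, and there your argument has a genuine gap. You extract the Fresnel contribution of the endpoint singularity $u^{-1/2}$ at $s=r+u$, $u\to0^+$, and assert that the three corrections (freezing the amplitude at $u=0$, the factor $e^{iu^2/(4t)}$, the tail $u\gtrsim1$) are each smaller by $c^{-1/2}$ because the integral ``concentrates on the scale $u\sim|t|/r$''. But none of these corrections is small in absolute value, so no size consideration on the effective scale can control them. For instance, the tail of the amplitude over $s\ge r+1$ integrates to $\asymp |t|^{-n/2}\,r^{n/2}\,e^{-Qr/2}$, which exceeds your main term $\asymp |t|^{-(n-1)/2}\,r^{(n-1)/2}\,e^{-Qr/2}$ by a factor $\sqrt{r/|t|}>\sqrt{c}$ --- the ratio \emph{grows} with $c$; similarly, for $|t|\le1$ one has $\int_0^1u^{-1/2}\,|e^{iu^2/(4t)}-1|\,\mathrm{d}u\asymp1$, again much larger than $\sqrt{|t|/r}$. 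Each correction must therefore be beaten by a further integration by parts against the full phase $e^{i(s^2-r^2)/(4t)}$, whose $u$-derivative is $\ge r/(2|t|)$; this does yield the advertised gain $c^{-1/2}$, but it is the substance of the proof and is missing from your sketch. The paper avoids stationary phase altogether: the change of variables $u=(s^2-r^2)/(4t)$ turns the phase into $e^{iu}$ exactly and the amplitude into a positive \emph{decreasing} function $f_r(s(u))$, so that
$$
\Bigl|\int_0^{+\infty}e^{iu}f_r(s(u))\,\mathrm{d}u\Bigr|\ \ge\ \int_0^{\pi}\sin u\,\bigl\{f_r(s(u))-f_r(s(u+\pi))\bigr\}\,\mathrm{d}u\,,
$$
by grouping the integral into an alternating series of nonnegative blocks, and the first block is then bounded below by writing $f_r(s(u))-f_r(s(u+\pi))=\int_0^\pi\{-f_r'(s(u+v))\}\,s'(u+v)\,\mathrm{d}v$ and estimating $-f_r'$ pointwise. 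If you wish to keep the Fresnel route you must supply the integrations by parts for all three error terms; otherwise the monotonicity argument is both shorter and self-contained.
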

\begin{proof}
We suppose for simplicity $t>0$.
We first consider the case when $k$ is even. By the expression (\ref{steven}) of the kernel and the expansion (\ref{derivativepq}), we obtain
\begin{equation}
\begin{aligned}
s_{t}(r)&=C\,t^{-1/2}\,e^{i\frac{Q^2t}4}\,e^{i\frac{r^2}{4t}}\,
\sum_{j=1}^{(k+m)/2-1}t^{-j}\,a_j(r)\\
&+C\,t^{-1/2}\,t^{-(k+m)/2+1} \,r^{(k+m)/2-1}\,\Big(-\frac1{\sinh r}\Big)^{k/2-1}\,\Big(-\frac1{\sinh (r/2)}\Big)^{m/2}\,
\mathcal D_1(e^{i\frac{r^2}{4t}})\\
&+C\,t^{-1/2}\,t^{-(k+m)/2+1}\, r^{(k+m)/2-1} \, \Big(-\frac1{\sinh r}\Big)^{k/2}\,\Big(-\frac1{\sinh (r/2)}\Big)^{m/2-1}\,
\mathcal D_2(e^{i\frac{r^2}{4t}})\\
&= A(t,r)+B(t,r)\,,
\end{aligned}
\end{equation}
where $A(t,r)$ corresponds to the sum and $B(t,r)$ comes from the last two summands. By computing the derivatives which appear in the term $B(t,r)$, we obtain
$$
B(t,r)=
C\,t^{-1/2}\,t^{-(n-1)/2} \,\Big(\frac{ir}2\Big)^{(n-1)/2}\,\Big(-\frac1{\sinh r}\Big)^{k/2}\,\Big(-\frac1{\sinh (r/2)}\Big)^{m/2}\,e^{i\frac{r^2}{4t}}.
$$
As in the proof of Proposition \ref{PointwiseKernelEstimates}, we see that 
\begin{equation}\label{Atr}
  A(t,r)=O(t^{-n/2+1}\,r^{(n-1)/2-1}\,e^{-\frac Q2r} ) \qquad \forall r>1+t\,,
\end{equation}
and there exists a positive constant $C$ such that
\begin{equation}\label{Btr}
|B(t,r)|\geq C \,t^{-n/2}\,r^{(n-1)/2}\,e^{-\frac Q2r}\qquad \forall r>1+t\,.
\end{equation}
By (\ref{Atr}) and (\ref{Btr}), we deduce that there exists a sufficently large constant $c$ and a positive constant $K$ 
such that $|s_t(r)|\geq K \,t^{-n/2}\,r^{(n-1)/2}\,e^{-\frac Q2r}$, $\forall r>1+c\,t$, 
as required.

\bigskip
Suppose now that $k$ is odd. As before, by (\ref{steven}) and 
(\ref{derivativepq}), we can write
\begin{equation}
\begin{aligned}
s_{t}(r)&= \tilde A(t,r)+\tilde B(t,r)\,,
\end{aligned}
\end{equation}
where $\tilde A(t,r)=C\,t^{-1/2}\sum_{j=1}^{(k+1+m)/2-1}\int_r^{\infty}t^{-j}\,a_j(s)\,
e^{i\frac{s^2}{4t}}\di\nu(s)$ and 
\begin{equation}
\begin{aligned}
 \tilde B(t,r)=
C\,t^{-1/2}\,t^{-n/2+1}\int_r^{\infty}s^{n/2-1}\,
\Big(\frac1{\sinh s} \Big)^{(k+1)/2}\,\Big(\frac1{\sinh (s/2)} \Big)^{m/2}\,     
\frac\partial {\partial s}  \Big(e^{i\frac{s^2}{4t}}\Big)\di\nu(s) \,.
\end{aligned}
\end{equation}
As in the proof of Proposition \ref{PointwiseKernelEstimates}, we see that 
\begin{equation}\label{Jtr}
\tilde A(t,r)=O(t^{-(n-1)/2}\,r^{n/2-1}\,e^{-\frac Q2r} )\,.
\end{equation} 
Since 
$$
\di\nu(s)=\frac{\sinh s}{\sqrt  {\cosh s-\cosh r}}=\sinh s\,\Big(2\,\sinh \frac{s+r}2\,\sinh \frac{s-r}2 \Big)^{-1/2},
$$
the main term 
$\tilde B(t,r)$ can be written as 
\begin{equation*}
\begin{aligned}
\tilde B(t,r)&=C\,t^{-(n+1)/2}\,e^{i\frac{r^2}{4t}}\,
\int_r^{\infty} \di s\,s\,\Big(\sinh\frac{s+r}2\,\sinh\frac{s-r}2 \Big)^{-1/2}
\Big(\frac s{\sinh s}\Big)^{(k-1)/2}\times\\
&\times\Big(\frac s{\sinh s/2} \Big)^{m/2}
\,e^{i\frac{s^2-r^2}{4t}}\\
&=C\,t^{-(n+1)/2}\,e^{i\frac{r^2}{4t}}\,
\int_r^{\infty} \di s\,s\,f_r(s)\,e^{i\frac{s^2-r^2}{4t}}\,,
\end{aligned}
\end{equation*}
where $f_r(s)=\Big(\sinh \frac{s+r}2\,\sinh\frac{s-r}2\Big)^{-1/2}
\Big(\frac s{\sinh s}\Big)^{(k-1)/2}\,\Big(\frac s{\sinh s/2} \Big)^{m/2}$. 
By changing variables $u=\frac{s^2-r^2}{4t}$ the integral transforms into
$$
\tilde B(t,r)=C\,t^{-\frac{n-1}2}
\,e^{\,\frac{ir^2}{4t}}
\int_{\,0}^{+\infty}\di u\;
e^{\,iu}\,f_r(s(u)).
$$
Hence
\begin{equation*}
\begin{aligned}
|\tilde B(t,r)|&\geq |C|\,t^{-\frac{n-1}2}\,
{\rm{Im}}\,\Bigl\{
\int_{0}^{+\infty}\di u\,
e^{iu}\,f_r(s(u))  
\Bigr\}\,,
\end{aligned}
\end{equation*}
which can be split up in the following sum
$$
|C|\,t^{-\frac{n-1}2}\,
\sum_{j=0}^{+\infty}\,
\int_{\,2j\pi}^{(2j+1)\pi}\di u\,\sin u\,
\bigl\{f_r(s(u))-f_r(s(u+\pi))\bigr\}\,,
$$
which, since $u\,\longmapsto f_r(s(u))$ 
is a positive decreasing function, is estimated from below by 
$$ |C|\,t^{-\frac{n-1}2}
\int_{0}^{\pi}\di u\;\sin u\;
\bigl\{ f_r(s(u))-f_r(s(u+\pi)) \bigr\}\,.
$$ 
To estimate the last integral we write
$$
f_r(s(u))-f_r(s(u+\pi))
=\int_{\,0}^{\,\pi}\di v\,\{-f_r'(s(u\!+\!v)\}\,s'(u\!+\!v)\,.
$$
Notice that $s(u)=\sqrt{4tu+r^2}$, so that $s'(u)=\frac{2t}{s(u)}$. We now compute the derivative of $-f_r$ obtaining 
\begin{equation}\label{-fr'}
\begin{aligned}
-f_r'(s)
&=\frac14\,\Bigl(\sinh\frac{s+r}2\sinh\frac{s-r}2\Bigr)^{-\frac32}
\sinh s\,\Bigl(\frac s{\sinh s}\Bigr)^{(k-1)/2}\,
\Bigl(\frac s{\sinh s/2}\Bigr)^{m/2}\,
 \\
&+ \Bigl(\sinh\frac{s+r}2\sinh\frac{s-r}2\Bigr)^{-\frac12}
\Big[
\frac{k-1}2\Bigl(\frac s{\sinh s}\Bigr)^{(k-3)/2}
\frac{s\coth s-1}{\sinh s}\,\Bigl(\frac s{\sinh s/2}\Bigr)^{m/2}\\
&+\Bigl(\frac s{\sinh s}\Bigr)^{(k-1)/2}\,\frac m2\Bigl(\frac s{\sinh s/2}\Bigr)^{m/2-1}
\frac{\frac s2\coth (\frac s2)-1}{\sinh\frac s2}\,
\Big]\, .
\end{aligned}
\end{equation}
We now use in \eqref{-fr'} the elementary estimates 
$$
\sinh s\asymp e^s , \qquad 
\sinh (s/2)\asymp e^{s/2} ,
\qquad
s\coth s-1\asymp s,
$$
and 
$$
\sinh\frac{s+r}2\sinh\frac{s-r}2
\asymp\frac{s^2-r^2}s\,e^s\,,
$$
to obtain 
$$
-f_r'(s)\asymp (s^2-r^2)^{-1/2}\,s^{\frac{n-1}2}\,e^{-\frac Q2s}
\,\big[  (s^2-r^2)^{-1}\,s+2\big]\,.
$$
By replacing $s=s(u+v)=\sqrt{4t(u+v)+r^2}$, we get
\begin{equation}\label{-fr'2}
-f_r'\big(s(u+v)\big)\asymp \big(4t(u+v)\big)^{-1/2}\,s(u+v)^{\frac{n-1}2}\,e^{-\frac Q2s(u+v)}
\,\big[ \big(4t(u+v)\big)^{-1} \,s(u+v)+2\big]\,.
\end{equation}
Observe that, in the integral defining $\tilde B(t,r)$, we have $1\leq r\leq s$ and
$$
s
=r\,\sqrt{1\!+\hspace{-.2mm}4\,(u\!+\!v)\,\frac t{r^2}\,}\,.$$
Since $0<u+v<2\pi$ and $r>1+t$, we deduce
$$
s(u+v) \lesssim r\,\Big[ 1+\frac{4(u+v)\frac t{r^2}}2\Big] 
\lesssim r+2(u+v)\frac tr
\lesssim r+4\,\pi\,. 
$$
From (\ref{-fr'2}) and the previous estimates, we get 
\begin{equation}
\begin{aligned}
-\,f_r'(s(u\!+\!v))\,s'(u\!+\!v)
&\asymp \big(4t(u+v)\big)^{-1/2}\,r^{\frac{n-1}2}\,e^{-\frac Q2r}
\,\big[ \big(4t(u+v)\big)^{-1} \,r+2\big]\,\frac{2t}r\\
& \asymp \big(4t(u+v)\big)^{-1/2}\,r^{\frac{n-1}2}\,e^{-\frac Q2r}
\,\big[ \big(2(u+v)\big)^{-1} +\frac{4t}r\big] \\
&\asymp\,(u\!+\!v)^{-\frac32}\,t^{-\frac12}\,r^{\frac{n-1}2}\,e^{-\frac Q2r}
\qquad \forall r>1+t\,.
\end{aligned}
\end{equation}
Hence
$$
f_r(s(u))-f_r(s(u+\pi))
\asymp\,u^{-\frac12}\,t^{-\frac12}\,r^{\frac{n-1}2}\,e^{-\frac Q2r},
$$
so that we obtain
\begin{equation}\label{Itr}
|\tilde B(t,r)|\geq|C|\,t^{-\frac{n-1}2}
\int_{\,0}^{\,\pi}\di u\;\sin u\;
\bigl\{\,f_r(s(u))-f_r(s(u+\pi))\,\bigr\}
\geq C\,t^{-\frac n2}\,r^{\frac{n-1}2}\,e^{-\frac Q2r}.
\end{equation}
By (\ref{Jtr}) and (\ref{Itr}), we see that there exists a 
sufficiently large constant $c$ and a positive constant $K$ 
such that $|s_{t}(r)|\geq K\,t^{-\frac n2}\,r^{\frac{n-1}2}\,e^{-\frac Q2r}$  for all $r>1+ct$, as required.
\end{proof}
\begin{prop}
For every $t$ in $\mathbb R\setminus\{0\}$, the following hold:
\begin{itemize}
\item[(i)] the kernel $\sigma_t$ does not lie in $L^{\infty}(S,\rho)$;
\item[(ii)] the operator $e^{it\mathcal L}$ is not bounded from $L^1(S,\rho)$ to $L^{\infty}(S,\rho)$.
\end{itemize}
\end{prop}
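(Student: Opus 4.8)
The plan is to read off both assertions from the pointwise lower bound of Lemma~\ref{stbelow} together with the identity \eqref{sigmat}, which upon taking moduli reads $|\sigma_t|=\delta^{1/2}\,|s_t|$ (the scalar phase $e^{i\frac{Q^2t}4}$ drops out).

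To prove (i) I would evaluate $\sigma_t$ along the curve $x_a=(0,0,a)\in S$ running into the cusp, i.e.\ letting $a\to0^+$. On this curve $\delta(x_a)^{1/2}=a^{-Q/2}$, while \eqref{distanza} gives $\cosh\frac{r(x_a)}2=\frac12\bigl(a^{1/2}+a^{-1/2}\bigr)$, whence $r(x_a)\to+\infty$ and, more precisely, $r(x_a)\asymp\log\frac1a$ and $e^{-\frac Q2 r(x_a)}\asymp a^{Q/2}$ as $a\to0$. In particular $r(x_a)>1+c|t|$ once $a$ is small, so Lemma~\ref{stbelow} applies and yields
\begin{align*}
|\sigma_t(x_a)| = \delta(x_a)^{1/2}\,|s_t(r(x_a))|
&\gtrsim |t|^{-n/2}\,a^{-Q/2}\,r(x_a)^{\frac{n-1}2}\,e^{-\frac Q2 r(x_a)}\\
&\asymp |t|^{-n/2}\,\bigl(\log\tfrac1a\bigr)^{\frac{n-1}2}\,,
\end{align*}
which tends to $+\infty$ as $a\to0^+$. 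Hence $\sigma_t$ is not essentially bounded, proving (i). The delicate point is exactly the exponential bookkeeping: the modular weight $\delta^{1/2}=a^{-Q/2}$ and the decay $e^{-\frac Q2 r}\asymp a^{Q/2}$ of $s_t$ cancel to leading order, so that what survives is the diverging polynomial factor $r^{\frac{n-1}2}\asymp(\log\frac1a)^{\frac{n-1}2}$; it is precisely this failure of cancellation between weight and decay that breaks the $L^\infty$ bound.

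For (ii) I would show that $L^1(S,\rho)\to L^\infty(S,\rho)$ boundedness of $e^{it\mathcal L}$ forces $\sigma_t\in L^\infty$, contradicting (i). Since $e^{it\mathcal L}f=f\ast\sigma_t$, expanding the convolution as $f\ast\sigma_t(x)=\int_S f(xy)\,\sigma_t(y^{-1})\di\lambda(y)$ and substituting $z=xy$ (using left-invariance of $\lambda$) gives the integral-kernel representation
\begin{equation*}
e^{it\mathcal L}f(x)=\int_S \sigma_t(z^{-1}x)\,f(z)\,\di\lambda(z)
=\int_S \sigma_t(z^{-1}x)\,\delta(z)\,f(z)\,\di\rho(z),
\end{equation*}
so that the integral kernel of $e^{it\mathcal L}$ relative to $\rho$ is $\widetilde K(x,z)=\sigma_t(z^{-1}x)\,\delta(z)$. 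For a kernel operator the $L^1(\rho)\to L^\infty(\rho)$ operator norm equals the essential supremum of $|\widetilde K|$; restricting to $z$ in a neighbourhood of the identity, where $\delta(z)\asymp1$, and letting $z^{-1}x$ run out to the cusp (e.g.\ $x=x_a$ as above), this essential supremum is at least $\|\sigma_t\|_{L^\infty(S,\rho)}=+\infty$ by (i). Therefore $e^{it\mathcal L}$ is unbounded from $L^1(S,\rho)$ to $L^\infty(S,\rho)$. Alternatively one may simply test against an approximate identity $(u_\varepsilon)$ with $\|u_\varepsilon\|_{L^1(\rho)}=1$ concentrating at the identity: then $u_\varepsilon\ast\sigma_t\to\sigma_t$ at each continuity point, so a uniform $L^1\to L^\infty$ estimate would give $\|\sigma_t\|_\infty<\infty$, again contradicting (i).

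I expect all the real work to sit in part (i)---in the verification that the modular factor precisely offsets the Gaussian-type decay of $s_t$ along the chosen family---while part (ii) is then a soft, standard identification of the $L^1\to L^\infty$ norm of a convolution operator with the sup-norm of its kernel.
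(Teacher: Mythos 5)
Your proof is correct and follows essentially the same route as the paper: part (i) is exactly the paper's argument (the paper works on the region $\Omega_t=\{r(x)>1+c|t|,\ a<1,\ |(X,Z)|<1\}$ rather than the single curve $x_a=(0,0,a)$, but the cancellation $\delta^{1/2}=a^{-Q/2}$ against $e^{-\frac Q2 r}\asymp a^{Q/2}$, leaving the divergent factor $(\log\frac1a)^{\frac{n-1}2}$, is identical). For part (ii) the paper uses precisely the approximate-identity argument you give as your alternative ($\phi_n\ast\sigma_t\to\sigma_t$ a.e.\ forces $\|\sigma_t\|_\infty\le M$), while your primary route via the integral-kernel representation $\widetilde K(x,z)=\sigma_t(z^{-1}x)\,\delta(z)$ is an equally valid, slightly more quantitative variant.
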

\begin{proof}
Since $\sigma_t=\delta^{1/2}\, e^{i\frac{Q^2t}4} \,s_t$, from Lemma \ref{stbelow} we deduce that there exist constants $c>1$ 
and $K>0$ for which
$$
|\sigma_t(x)|\geq K\,|t|^{-n/2}\, \delta^{1/2}(x)\,r(x)^{\frac{n-1}2}\,
e^{-\frac Q2r(x)}
\qquad \forall r(x)>1+c|t|\,.
$$
Let $\Omega_t$ be the following region:
$$
\Omega_t=\{x=(X,Z,a)\in \mathfrak{v}\times\mathfrak{z}\times\mathbb R^+:~r(x)>1+c|t|, a<1, |(X,Z)|<1\}.
$$
By formula (\ref{distanza}), for any point $(X,Z,a)$ in 
$\Omega_t$, we have 
$$
e^{r(X,Z,a)}\asymp a^{-1}\qquad {\rm{and}} 
\qquad r(X,Z,a)\asymp \log(a^{-1}) .
$$
Hence for any point $(X,Z,a)$ in 
$\Omega_t$  
$$
|\sigma_t(X,Z,a)|\geq C\,a^{-Q/2}\,|t|^{-n/2}\,[\log(a^{-1})]^{\frac{n-1}2}\,
a^{Q/2}\geq C\,|t|^{-n/2}\,[\log(a^{-1})]^{\frac{n-1}2}.
$$
This shows that $\sigma_t$ is not in $L^{\infty}(S,\rho)$ and proves (i). 

Let now $\phi_n$ be a sequence of approximations of the identity, 
i.e. functions in $C_c^{\infty}(S)$ 
supported in the ball centred at the identity of radius $1/n$ such that 
$\|\phi_n\|_{L^1(S,\,\rho)}=1$, $0\leq \phi_n\leq 1$. 
Suppose that the operator $e^{it\mathcal L}$ is bounded from $L^1(S,\rho)$ to 
$L^{\infty}(S,\rho)$. Then there exists a constant $M$ such that 
$\|\phi_n\ast \sigma_t\|_{L^{\infty}(S,\rho)}\leq M$. Since $\phi_n\ast \sigma_t$ converges to $\sigma_t$ almost everywhere we deduce that $|\sigma_t|\leq M$ 
almost everywhere which contradicts (i). Thus the operator $e^{it\mathcal L}$ 
is not bounded from $L^1(S,\rho)$ to 
$L^{\infty}(S,\rho)$.
\end{proof}
Even if the $L^1-L^{\infty}$ dispersive estimate does not hold, we shall 
prove suitable weighted Strichartz estimates for the Schr\"odinger equation 
associated with the Laplacian $\mathcal L$. We shall deduce them from the 
Strichartz estimates which hold for the  Schr\"odinger equation associated 
with the Laplace-Beltrami operator. To do so, for any $q\in [2,\infty)$ 
we introduce the weight function $\delta_q$ defined by 
\begin{equation}\label{deltaq}
\delta_q=\delta^{1-q/2}=\delta^{q\,\big(\frac1q-\frac12\big)}.
\end{equation}
The weights $\delta_q$ are involved in a simple 
relationship between the $L^q$ norms of functions computed with respect to 
the right and left Haar measures.
\begin{lemma}\label{Lpnorms}
For any $q\in [2,\infty)$, the following hold:
\begin{itemize}
\item[(i)] $\|\delta^{-1/2}f\|_{L^q(S,\lambda)}=\|f\|_{L^q(S,\delta_q\rho)}$ for every $f$ in $L^q(S,\delta_q\rho)$;
\item[(ii)] $\|f\|_{L^q(S,\lambda)}=\|\delta^{1/2}f\|_{L^q(S,\delta_q\rho)}$ for every $f$ in $L^q(S,\lambda)$.
\end{itemize}
\end{lemma}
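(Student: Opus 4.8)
The plan is to reduce both identities to a single elementary relation between the two Haar measures, namely $\mathrm{d}\lambda=\delta\,\mathrm{d}\rho$. This follows at once from the explicit formulas recalled in Section~2: since $\mathrm{d}\lambda(X,Z,a)=a^{-(Q+1)}\,\mathrm{d}X\,\mathrm{d}Z\,\mathrm{d}a$, $\mathrm{d}\rho(X,Z,a)=a^{-1}\,\mathrm{d}X\,\mathrm{d}Z\,\mathrm{d}a$ and $\delta(X,Z,a)=a^{-Q}$, one has $a^{-(Q+1)}=a^{-Q}\,a^{-1}$, whence $\mathrm{d}\lambda=\delta\,\mathrm{d}\rho$. (This is of course just the general relation between left and right Haar measures via the modular function, here made concrete.)

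Granting this, part (i) is a direct computation. I would raise the left-hand norm to the power $q$ and substitute the measure relation:
\[
\|\delta^{-1/2}f\|_{L^q(S,\lambda)}^q=\int_S\delta^{-q/2}\,|f|^q\,\mathrm{d}\lambda=\int_S\delta^{-q/2}\,|f|^q\,\delta\,\mathrm{d}\rho=\int_S\delta^{1-q/2}\,|f|^q\,\mathrm{d}\rho.
\]
By the very definition $\delta_q=\delta^{1-q/2}$ in \eqref{deltaq}, the last integral equals $\|f\|_{L^q(S,\delta_q\rho)}^q$, which proves (i) after taking $q$-th roots.

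For part (ii) I would argue in the same way, now carrying the factor $\delta^{1/2}$ through and exploiting that the exponents are designed to cancel:
\[
\|\delta^{1/2}f\|_{L^q(S,\delta_q\rho)}^q=\int_S\delta^{q/2}\,|f|^q\,\delta_q\,\mathrm{d}\rho=\int_S\delta^{q/2}\,\delta^{1-q/2}\,|f|^q\,\mathrm{d}\rho=\int_S\delta\,|f|^q\,\mathrm{d}\rho.
\]
Using $\mathrm{d}\lambda=\delta\,\mathrm{d}\rho$ once more, the right-hand side is exactly $\int_S|f|^q\,\mathrm{d}\lambda=\|f\|_{L^q(S,\lambda)}^q$, which gives (ii). Since every manipulation is a genuine equality of integrals of nonnegative measurable functions, no convergence issue arises and the restriction $q\in[2,\infty)$ serves only to keep the exponent $1-q/2$ and hence $\delta_q$ well defined. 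I do not expect any real obstacle here: the statement is essentially a bookkeeping consequence of the matching exponents in the two Haar densities, and the only point to be careful about is the direction of the relation $\mathrm{d}\lambda=\delta\,\mathrm{d}\rho$ (as opposed to $\mathrm{d}\rho=\delta\,\mathrm{d}\lambda$), which the explicit formulas settle unambiguously.
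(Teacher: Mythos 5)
Your proposal is correct and follows essentially the same computation as the paper: raise the norm to the power $q$, substitute $\mathrm{d}\lambda=\delta\,\mathrm{d}\rho$, and observe that the exponents combine to give $\delta_q=\delta^{1-q/2}$. The only cosmetic difference is that the paper deduces (ii) directly from (i) by substituting $\delta^{1/2}f$ for $f$, whereas you verify (ii) by an independent (equally valid) computation.
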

\begin{proof}
Take $f$ in $L^q(S,\delta_q\rho)$. We have that
\begin{equation}
\begin{aligned}
\|\delta^{-1/2}f\|^q_{L^q(S,\lambda)}&=
\int \delta^{-q/2}\,|f|^q\di\la\\
&=\int \delta^{-q/2}\,|f|^q\,\delta\di\rho\\
&=\int \delta^{q(1/q-1/2)}\,|f|^q\di\rho\\
&=\|f\|^q_{L^q(S,\delta_q\rho)}.
\end{aligned}
\end{equation}
This proves (i). The statement (ii) follows directly from (i).
\end{proof}
\begin{theorem}\label{translatedestimates}
Consider  the Cauchy Problem for the linear Schr\"odinger equation
\begin{equation*}
 \begin{cases}
  & i \partial_t u(t,x) + \mathcal L u(t,x) =F(t,x) \\
  & u(0,x)=f(x),\; \,x \in S.\\
 \end{cases}
\end{equation*}
For all $\big(\frac1p,\frac1q\big)$ and 
$\big(\frac1{\tilde{p}},\frac1{\tilde{q}}\big)$ 
in the admissible triangle $T_n$, the solution 
\begin{equation}\label{solution}
u(t,x)=e^{it{\mathcal L}} f(x) + \int_0^t \di s\, e^{i(t-s){\mathcal L}}F(s,x),
\end{equation}
satisfies the following weighted Strichartz estimates  
\begin{equation*}
\|u\|_{L^p(\mathbb R;\,L^q(S,\delta_q \,\rho))}\lesssim \, \|f\|_{L^2(S,\rho)}+
\|F\|_{L^{\tilde{p}'}(\mathbb R; \,L^{\tilde{q}'}(S,\delta_{\tilde{q}'}\,\rho))}\,.
\end{equation*}
\end{theorem}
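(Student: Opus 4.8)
The plan is to deduce these weighted estimates directly from the unweighted Strichartz estimates for $\Delta_S$ established in Theorem \ref{1}, by conjugating the propagator $e^{it\mathcal L}$ into $e^{it\Delta_S}$ by means of the modular function. The key observation is that the intertwining \eqref{relazionemolt}, equivalently the kernel identity \eqref{sigmat}, turns $e^{it\mathcal L}$ into $e^{it\Delta_S}$ up to the spectral shift $Q^2/4$, while the \emph{same} factor $\delta^{1/2}$ carries the right-measure weighted norms $L^q(S,\delta_q\rho)$ into the left-measure norms $L^q(S,\lambda)$ through Lemma \ref{Lpnorms}. Matching these two roles of $\delta^{1/2}$ is what makes the argument work.

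First I would record the propagator form of the intertwining. Applying \eqref{relazionemolt} to the multiplier $v\mapsto e^{-i\tau v}$ and using $\Delta_Q=\Delta_S+Q^2/4$, one finds
$$e^{i\tau\mathcal L}g=\delta^{1/2}\,e^{i\tau Q^2/4}\,e^{i\tau\Delta_S}\bigl(\delta^{-1/2}g\bigr),$$
an identity valid for general, not necessarily radial, $g$: writing both sides as convolutions, substituting $\sigma_\tau=\delta^{1/2}e^{i\tau Q^2/4}s_\tau$ from \eqref{sigmat}, and using the homomorphism property $\delta(xy^{-1})=\delta(x)\delta(y)^{-1}$ of the modular function makes both sides coincide. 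Inserting this relation into the Duhamel formula \eqref{solution}, with $\tau=t$ in the free term and $\tau=t-s$ inside the integral, and pulling the $x$-independent scalar $e^{itQ^2/4}$ out front, I would obtain
$$u(t)=\delta^{1/2}\,e^{itQ^2/4}\,\Bigl[\,e^{it\Delta_S}g+\int_0^t\di s\;e^{i(t-s)\Delta_S}\widetilde F(s)\,\Bigr]=:\delta^{1/2}\,e^{itQ^2/4}\,v(t),$$
with $g=\delta^{-1/2}f$ and $\widetilde F(s)=e^{-isQ^2/4}\,\delta^{-1/2}F(s)$. Thus $v$ is exactly the Strichartz solution of the $\Delta_S$-Cauchy problem with data $(g,\widetilde F)$, and Theorem \ref{1} gives
$$\|v\|_{L^p(\mathbb R;\,L^q(S,\lambda))}\lesssim\|g\|_{L^2(S,\lambda)}+\|\widetilde F\|_{L^{\tilde{p}'}(\mathbb R;\,L^{\tilde{q}'}(S,\lambda))}.$$

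It then remains to translate the three norms. Since $|e^{itQ^2/4}|=1$, we have $|u(t,x)|=\delta^{1/2}(x)\,|v(t,x)|$ pointwise, so Lemma \ref{Lpnorms}(ii) yields $\|u(t,\cdot)\|_{L^q(S,\delta_q\rho)}=\|v(t,\cdot)\|_{L^q(S,\lambda)}$ for each $t$, hence equality after taking the $L^p$ norm in $t$. For the initial datum, Lemma \ref{Lpnorms}(i) together with the identity $\delta_2=\delta^{1-2/2}=1$ read off from \eqref{deltaq} gives $\|g\|_{L^2(S,\lambda)}=\|\delta^{-1/2}f\|_{L^2(S,\lambda)}=\|f\|_{L^2(S,\rho)}$; and since $|\widetilde F(s,\cdot)|=\delta^{-1/2}|F(s,\cdot)|$, the same lemma gives $\|\widetilde F(s,\cdot)\|_{L^{\tilde{q}'}(S,\lambda)}=\|F(s,\cdot)\|_{L^{\tilde{q}'}(S,\delta_{\tilde{q}'}\rho)}$, and hence equality of the $L^{\tilde{p}'}$-in-time norms. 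Substituting these three identities into the estimate for $v$ would produce precisely the asserted weighted Strichartz estimate. The step needing the most care, rather than any deep obstacle, is the bookkeeping of the spectral-shift phase: the factor $e^{-isQ^2/4}$ that survives inside the Duhamel integral must be absorbed into $\widetilde F$, and one uses that, being of modulus one, it leaves every $L^q$ norm unchanged; all remaining manipulations are routine identities for the modular function.
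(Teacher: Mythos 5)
Your proposal is correct and follows essentially the same route as the paper: it conjugates $e^{it\mathcal L}$ into $e^{it\Delta_S}$ via the kernel relation \eqref{sigmat} together with the convolution identity $h\ast(\delta^{1/2}g)=\delta^{1/2}[(\delta^{-1/2}h)\ast g]$, applies Theorem \ref{1} to the transformed Duhamel solution, and translates the norms back with Lemma \ref{Lpnorms}. The only cosmetic difference is that you package the intertwining as an operator identity for the propagator before inserting it into the Duhamel formula, whereas the paper substitutes the kernel relation directly into the convolution representation of $u$.
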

\begin{proof}
By (\ref{sigmat}), we deduce that
\begin{equation}\label{ok}
u(t,x)=e^{i\frac{Q^2t}4}\,f\ast (\delta^{1/2}\,s_{t})(x)+
\int_0^t\di s \,e^{i\frac{Q^2(t-s)}4}\,\big[F\ast (\delta^{1/2}\,s_{(t-s)})\big](s,x).
\end{equation}
It is easy to see that for any functions $h,g$ on $S$ 
\begin{equation}\label{sposta}
h\ast (\delta^{1/2}g)=\delta^{1/2}\big[\big(\delta^{-1/2}h\big)\ast g\big].
\end{equation}
Applying (\ref{sposta}) in (\ref{ok}), we obtain 
\begin{equation}
\begin{aligned}
e^{-i\frac{Q^2t}4}\,\delta^{-1/2}\,u(t,x)&=\big(\delta^{-1/2}f \big)\ast s_{t}(x)+
\int_0^t\di s \,e^{-i\frac{Q^2s}4}\,\big[\delta^{-1/2}F\ast s_{(t-s)}\big](s,x)\,.
\end{aligned}
\end{equation}
Suppose now that $\big(\frac1p,\frac1q)$ and 
$\big(\frac1{\tilde{p}},\frac1{\tilde{q}}\big)$ lie 
in the admissible triangle $T_n$ introduced in (\ref{Tn}). 
By Lemma \ref{Lpnorms} and Theorem \ref{1}, we get
\begin{equation}
\begin{aligned}
\|u\|_{L^p(\mathbb R;L^q(S,\delta_q \,\rho))}&=
\|\delta^{-1/2}u\|_{L^p(\mathbb R;L^q(S,\lambda))}\\
&\lesssim\,\|\delta^{-1/2}f \|_{L^2(S,\lambda)}+\,\|\delta^{-1/2}F\|_{ L^{\tilde{p}'}(\mathbb R; L^{\tilde{q}'}(S,\lambda)) }\\
&=\,\|f\|_{L^2(S,\rho)}+\,\|F\|_{L^{\tilde{p}'}(\mathbb R;L^{\tilde{q}'}(S,\delta_{\tilde{q}'}\,\rho))},
\end{aligned}
\end{equation}
as required.
\end{proof}

\end{document}